\newcommand{\blue}[1]{\textcolor{blue}{#1}}
\definecolor{brightpink}{rgb}{1.0, 0.0, 0.5}
\date{\today}
\theoremstyle{plain}
\newtheorem{theorem}{Theorem}[section]
\newtheorem{lemma}[theorem]{Lemma}
\newtheorem{proposition}[theorem]{Proposition}
\theoremstyle{definition}
\newtheorem{example}[theorem]{Example} 
\newtheorem{remark}[theorem]{Remark} 
\newtheorem{definition}[theorem]{Definition} 
\newtheorem{case}[theorem]{Case}
\DeclarePairedDelimiter\Floor\lfloor\rfloor
\DeclareMathOperator{\la}{\langle}
\DeclareMathOperator{\ra}{\rangle}
\DeclareMathOperator{\Z}{\mathbb{Z}}
\DeclareMathOperator{\sube}{\subseteq}
\def\field{\Bbbk}
\author{Nantel Bergeron}\address[Bergeron]
{Department of Mathematics and Statistics\\ York  University\\ To\-ron\-to, Ontario M3J 1P3\\ CANADA}
\email{bergeron@mathstat.yorku.ca}
\urladdr{http://www.math.yorku.ca/bergeron}
\author{Xavier Mootoo}\address[Mootoo]
{Department of Mathematics and Statistics\\ York  University\\ To\-ron\-to, Ontario M3J 1P3\\ CANADA}
\email{xmootoo@gmail.com}
\author{Vedarth Vyas}\address[Vyas]
{Department of Electrical Engineering and Computer Science\\ York  University\\ To\-ron\-to, Ontario M3J 1P3\\ CANADA}
\email{vvyas@my.yorku.ca}
\thanks{Bergeron: Research supported in part by NSERC and York Research Chair.}
\thanks{Mootoo: Research supported by NSERC and Bergeron's York Research Chair}
\thanks{Vyas: Research supported by Bergeron's York Research Chair}
\title[{G}r\"obner Basis of a Catalan Path Ideal]{{T}he Gr\"obner Basis of a Catalan Path Ideal}
\keywords{Gr\"obner Basis, Catalan Paths, Path Enumeration}
\begin{document}

\begin{abstract}
	For the ideal $I = \langle y_1 +  \dots + y_n, y^2_1, \dots , y^2_n \rangle$ in $R = \field[y_1, \dots , y_n]$ 
	 with char($\field$) = 0, we show that the reduced Gr\"obner basis with lex-order consists of polynomials $g_\alpha$ that are
	represented in terms of paths, moving northeast in the Cartesian plane, that stay above the diagonal 
	and cross the diagonal at the last step.  This implies that a linear basis for the quotient ring $R/I$ is given by a set of Catalan paths. We show that the dimension 
	is the number of standard Young tableaux of size $n$ and height at most two. The graded Frobenius characteristic of $R/I$ as a symmetric group module is given by 
	$\sum_{k=0}^{\lfloor \frac{n}{2} \rfloor } s_{n-k,k}q^k$.
\end{abstract} 

\maketitle

\section{Introduction} 
Let $R = \field[y_1, \dots, y_n]$ be the polynomial ring over a field $\field$ with $\text{char}( \field) = 0$ and consider the ideal:
$    I = \la y_1 + y_2 + \dots + y_n, y^2_1, y^2_2, \dots , y^2_n \ra \subseteq R\,.$
We show that the graded quotient $N = R /I$ has its Hilbert series given by 
$$H_N(q)=\sum_{k=0}^{\lfloor \frac{n}{2} \rfloor } f^{(n-k,k)}q^k\,,$$
 where $f^{(n-k,k)}$ is the number of standard Young tableaux of shape $(n-k,k)$, equivalently, the dimension of the irreducible symmetric group indexed by that shape. 
 These numbers are related to paths that remain above the diagonal, such as Catalan paths (see Definition~\ref{catalan_path}).
 We then show that $N$ is indeed a symmetric group module and
its irreducible representation decomposition is as expected (see Section~\ref{sec:irreducible}). This problem was suggested to us by John Machacek~\cite{M}.
This ideal is one among a family of ideals related to those which are studied extensively in \cite{HRS}, and was motivated as a commutative version of the exterior portion of the super-space quotient defined in \cite{Z}. Our main goal is to find the Gr\"obner basis explicitly for the ideal described above.  
\begin{theorem} \label{main_theorem}
	 The reduced Gr\"obner basis with respect to $>_{lex}$ for the ideal $I$ consists of $\{y_2^2, \dots, y_n^2\}$ and of the polynomials
	\begin{equation} \label{main_theorem_eqn}
		g_{\alpha} = y^{\alpha} + \sum_{\beta \in P_{\alpha}} y^{\beta}
	\end{equation}
	Where $\alpha \in \{0,1\}^n$ is a Modified Catalan Path (see Definition~\ref{MCP}) and $P_\alpha$ is a set of paths weakly above $\alpha$ (see Definition~\ref{p_alpha}). \end{theorem}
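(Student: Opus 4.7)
The plan is to verify Buchberger's criterion in three stages: first, establish that every proposed element lies in $I$; second, show that the S-polynomial of every pair in $G = \{y_2^2, \ldots, y_n^2\} \cup \{g_\alpha : \alpha \text{ a Modified Catalan Path}\}$ reduces to zero modulo $G$; and third, confirm the reduced-basis condition that no leading term in $G$ divides another monomial appearing in $G$. Throughout, the leading term of $g_\alpha$ under $>_{lex}$ is taken to be $y^\alpha$, because every $\beta \in P_\alpha$ lies weakly above $\alpha$ and (under the convention implicit in the statement) therefore produces a lex-smaller monomial.

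For containment $G \subseteq I$, I would proceed by induction on the path $\alpha$. The minimal Modified Catalan Path $\alpha = (1, 0, \ldots, 0)$ yields $g_\alpha = y_1 + y_2 + \cdots + y_n$, which is itself a generator of $I$. For a general $\alpha$, I would construct $g_\alpha$ by multiplying a shorter $g_{\alpha'}$ by an appropriate variable $y_j$ and reducing modulo the squares $y_k^2$; the combinatorial core of this step is a bijection showing that the monomials produced by the reduction match precisely the lattice paths in $P_\alpha$. For the reduced condition, each leading term $y^\alpha$ is a squarefree monomial in $\{0,1\}^n$, hence not divisible by any $y_i^2$, and the trailing monomials $y^\beta$ with $\beta \in P_\alpha$ are (by the characterization of $P_\alpha$ as paths weakly above $\alpha$ but not $\alpha$ itself) not themselves Modified Catalan Paths, so they are not leading terms of any other $g_{\alpha''}$; a short combinatorial check then rules out divisibility between leading terms of MCPs of different degrees.

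The main obstacle is the S-polynomial verification. Pairs of squares $y_i^2, y_j^2$ have coprime leading terms, so their S-polynomials reduce to zero trivially. For a square $y_i^2$ and a polynomial $g_\alpha$, the S-polynomial is a controlled squarefree combination that can be reduced using other $g_\beta$'s together with the squares. The critical case is pairs $(g_\alpha, g_{\alpha'})$: here the S-polynomial is a sum of squarefree monomials indexed by lattice paths, and one must match these with leading terms of other $g_\beta$'s (modulo squares) to witness reduction to zero. I expect this to follow from a bijection or sign-reversing argument on the set of paths arising in the S-polynomial, driven by the combinatorial structure of $P_\alpha$ and $P_{\alpha'}$ and the operation "lift one east step one unit northwest". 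Making this bijection explicit, and verifying that each reduction step strictly lowers the leading monomial, is where I anticipate the bulk of the technical work.
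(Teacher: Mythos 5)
Your plan hinges on verifying Buchberger's criterion for \emph{all} pairs, including $(g_\alpha, g_{\alpha'})$, and you correctly identify this as the critical case --- but you leave it entirely open, hoping for ``a bijection or sign-reversing argument'' you have not identified. This is a genuine gap, and it is also exactly the step the paper is engineered to avoid. The paper runs only a restricted sub-loop of Buchberger's algorithm: it reduces the S-polynomials $S(g_\alpha, y_k^2)$ for $\alpha_k=1$ (Lemma~\ref{lemma_phi_map}), showing via an invertible linear combination that each loop produces exactly the next generation of MCP polynomials $g_{\alpha^{(k_i)}}$. It never touches $S(g_\alpha, g_{\alpha'})$. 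Termination is settled instead by a dimension count: Lemma~\ref{lem:Bspan} shows the square-free monomials whose paths stay above the diagonal span $R/I$, and Section~\ref{sec:irreducible} exhibits the same number of independent Specht polynomials in $I^\perp$ via the inverse-system pairing, forcing $\dim(R/I)$ to equal the size of that spanning set. Since the standard monomials under $F_\ell$ already achieve the dimension, no further S-polynomial can produce a new leading term, and every remaining reduction must vanish. Without an analogous termination argument, your approach cannot close: a direct combinatorial reduction of $S(g_\alpha, g_{\alpha'})$ would be at least as intricate as the $S(g_\alpha, y_k^2)$ analysis in \S\ref{beta_prime_redu}--\S\ref{results}, which already requires decomposing paths by their first $d$ steps, splitting into several nested cases in $r$ and $s$, and inverting a structured matrix.

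A secondary problem is in your inductive containment argument. ``Multiply a shorter $g_{\alpha'}$ by a variable $y_j$ and reduce modulo the squares'' does not directly yield any $g_\alpha$: for $\alpha'_j=1$, Lemma~\ref{spoly_lemma} gives $\overline{y_j g_{\alpha'}}^{\{y_2^2,\dots,y_n^2\}}=\sum_{\beta\in P_{\alpha'}^{(j)}} y^\beta y_j$, which is a proper subsum of paths, not a single MCP polynomial. The paper must further reduce by $g_{\alpha'}$ itself (Equation~\eqref{eq:thefirstr1}) and then apply the inverse $M_{\alpha'}$ of the coefficient matrix $A_{\alpha'}$ to linearly untangle the family $\{\mathcal{S}_{\alpha',k_i}\}$ into the individual polynomials $g_{(\alpha')^{(k_i)}}$. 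That linear-algebraic step is where the content of the construction lives; your sketch compresses it to a phrase and would need to recover it to make the induction go through.
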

The   paper is divided as follows. In \S\ref{background}  we introduce some definitions, notations and a loop version of our main theorem; namely Theorem~\ref{main_theorem2}. 
We span \S\ref{proof} to construct the Gr\"obner basis and in \S\ref{sec:irreducible}, we will compute some independent elements in the orthogonal complement of $I$. A dimension argument will allow us to conclude that they form a basis of the quotient and that Theorem~\ref{main_theorem2} holds. As a byproduct, we show that each homogeneous component is of degree $k\le {\lfloor \frac{n}{2} \rfloor }$. It is in fact  an irreducible  symmetric group module indexed by the shape $(n-k,k)$.

\medskip
\noindent
\textbf{Acknowledgement.} We would like to thank John Machacek for his prior work on the project and for his suggestion to continue working on it.

\subsection{ \label{background} Definitions, notation and a loop version of our main theorem}

In the following section, we introduce notation and definitions relevant to the construction of the Gr\"obner basis.
\begin{definition}
	For $\alpha = (\alpha_1, \dots, \alpha_n) \in \Z_{\geq 0}^n$, we denote a monomial $y^\alpha = y_1^{\alpha_1} \cdots y_n^{\alpha_n}\in \field[y_1, \dots, y_n]$. 
	For any square free monomial $y^\alpha$, we call the $\{0,1\}$--sequence $\alpha$, the {\bf bitstring} of the monomial. We often remove comas and parenthesis to represent $\alpha$.
\end{definition}
\begin{definition} \label{catalan_path}
	A {\bf Catalan path} is a series of unit steps forming a staircase walk from $(0,0)$ to $(l,l)$, which is weakly above the line $y=x$ in the Cartesian plane. Each step is either $(0,1)$  an  {\bf east} step, or $(1,0)$ a {\bf north} step. Each path contains a total of $2l$ steps.
\end{definition}
\begin{remark}\label{rem:path}
	   An arbitrary northeast path corresponds to a bit string $\alpha \in \{0, 1 \}^n$, where $\alpha_i = 0$ indicates  the $i^{th}$ step is north, and $\alpha_j = 1$ indicates the $j^{th}$ step 
	   is east. We say that $\alpha$ is a path of length $n$. We have the following correspondence:
	   \begin{center}
		Square-Free Monomial \quad$\iff$\quad Bitstring \quad$\iff$\quad Path.
	   \end{center}
	   We will often reference a monomial, path or bitstring interchangeably as the same object.
\end{remark}
\begin{example}
		Consider the monomials $y_2y_4y_5, y_2y_4y_6 \in \field[y_1, \dots, y_6]$; their bitstring representations are \texttt{010110} and \texttt{010101} respectively. 
		Their corresponding paths are respectively
$$
\begin{tikzpicture}[scale=0.5] 
	\draw[help lines] (0,0) grid +(3,3);
	\draw[thick] (0,0) -- +(3,3);
	\draw[line width=1.5pt, draw = red] (0,0)--(0,1)--(1,1)--(1,2)--(2,2)--(3,2)--(3,3);
\end{tikzpicture}
\qquad \text{and} \qquad
\begin{tikzpicture}[scale=0.5] 
	\draw[help lines] (0,0) grid +(3,3);
	\draw[thick] (0,0) -- +(3,3);
	\draw[line width=1.5pt, draw = red] (0,0)--(0,1)--(1,1)--(1,2)--(2,2)--(2,3)--(3,3);
\end{tikzpicture}\,.
$$
\end{example}

For definitions and theorems pertaining to Gr\"obner basis theory, we follow \cite{iva}.  Throughout this paper, we will use the following lexicographic order.
\begin{definition} \label{lex}
	Let $\alpha, \beta \in \Z^n_{\geq 0}$. We say that $\alpha >_{lex} \beta$, if the leftmost nonzero entry of the vector difference $\alpha - \beta \in \Z^n_{\geq 0}$ is positive. 
	We will write $y^\alpha >_{lex} y^\beta$ if $\alpha >_{lex} \beta$. Note that $>_{lex}$ is a monomial order \cite[\S2 \textemdash Proposition 4]{iva}. 
\end{definition}

Theorem~\ref{main_theorem} is our main theorem. To prove it, we decompose the Buchberger's algorithm into loops. The division of S-polynomials produced in each loop is either the next generation of MCP polynomials  or has a zero remainder.
\begin{definition}  \label{MCP}
	A sequence  $\alpha \in \{0,1\}^n$ is a {\bf Modified Catalan Path (MCP)} if there exists integers $l,m\ge 0$ and a Catalan path $w \in \{0,1\}^{2l}$ such that $\alpha = w\texttt{1}(\texttt{0})^m$.
	Note that $\alpha$ is equivalent to a Catalan path with an additional East step after its final step. We let $d=2l+1$ denote the position of the last $1$ in $\alpha$.
\end{definition}
\begin{definition}  \label{pathlength}
	For a sequence $\alpha \in \{0,1\}^n$, we say that $\ell(\alpha)=n$ is the {\bf length} of $\alpha$. The number of  $0$ entries and the number of $1$ entries in $\alpha$ are denoted $	\ell_0(\alpha)$ and $\ell_1(\alpha)$, respectively. Remark that $\ell_1(\alpha)$ is the degree of the corresponding monomial $y^{\alpha}$. Moreover, if $\alpha= w\texttt{1}(\texttt{0})^m$ is an MCP, then 
	$d = \ell(w)+1 =2\ell_1(\alpha)-1$ is the position of the last entry $1$ in $\alpha$.
\end{definition} 
\begin{definition} \label{p_alpha}
	For a fixed MCP $\alpha$, let $d=2\ell_1(\alpha)-1$. We define the set:
	\begin{align*}
		P_{\alpha} = \big\{ \beta \in \{0,1\}^n \big| \beta \neq \alpha,\  \ell_1(\beta)=\ell_1(\alpha) \text{ and } (\alpha_i = 0 \implies \beta_i = 0)_{1 \leq i \leq d}\big\}
	\end{align*}
	Thus, any $\beta \in P_\alpha$ will correspond to a northeast path that is weakly above $\alpha$ and ends in the same position as $\alpha$.
\end{definition}
\begin{remark}\label{rem:lex}
	Note that $\alpha >_{lex} \beta$, for all  $ \beta \in P_{\alpha}$. Hence, the polynomial $g_\alpha$ in Equation~\eqref{main_theorem_eqn} has leading term given by $y^\alpha$.
\end{remark}
\begin{definition} \label{p_alpha_k}
	For an integer $k \geq 1$ such that $\alpha_k=1$, let
		$P_\alpha^{(k)} = \big\{\beta  \in P_\alpha \ | \ \beta_k = 0\big\}.$
\end{definition}
\begin{definition} \label{F_ell}
	Given an integer $\ell \ge -1$ let:
	\begin{equation*}
		F_\ell = \big\{ y^2_2, \dots, y^2_n\big\} \cup \big\{ g_{\alpha} \big|  \alpha \text{ is an MCP and } \ell_1(\alpha)-1 \leq \ell \big\} \,,
	\end{equation*}
	where $g_\alpha$ is as indicated in Equation~\eqref{main_theorem_eqn}. Here $F_{-1}= \big\{ y^2_2, \dots, y^2_n\big\}$.
\end{definition}
\begin{theorem} 
\label{main_theorem2}
    Fix an integer $n>0$ and let $\ell = \Floor{\frac{n-1}{2}}$. The set $F_\ell$ is the reduced Gr\"obner basis for the ideal $I = \la y_1 + y_2 + \cdots + y_n, y_1^2, y_2^2, \dots, y_n^2 \ra$
    \end{theorem}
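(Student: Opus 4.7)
My plan is to show $F_\ell$ is the reduced Gröbner basis of $I$ by verifying (i) $\langle F_\ell \rangle = I$, (ii) Buchberger's criterion for $F_\ell$, and (iii) reducedness of $F_\ell$. The verification proceeds by induction on $\ell$, mirroring the successive loops of Buchberger's algorithm: at each stage, the only new polynomials introduced are the $g_\alpha$ for MCPs $\alpha$ with $\ell_1(\alpha)$ equal to the current generation. For (i), each $y_i^2$ with $i \geq 2$ is a generator of $I$, and $g_{\mathtt{1}(\mathtt{0})^{n-1}} = L = y_1 + \cdots + y_n$ is also a generator. Each $g_\alpha$ with $\ell_1(\alpha) \geq 2$ is produced inductively as part of the remainder of an S-polynomial involving an MCP polynomial of one lower degree and a square $y_j^2$, hence $g_\alpha \in I$. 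Conversely, the omitted generator $y_1^2$ lies in $\langle F_\ell \rangle$: check $y_1^2 \equiv (y_2 + \cdots + y_n)^2 \pmod L$ and expand using the $y_j^2$'s together with $g_{\mathtt{011}(\mathtt{0})^{n-3}}$.

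For (ii), the S-polynomial $S(y_i^2, y_j^2)$ vanishes by coprimality. For $S(g_\alpha, y_i^2)$: trivial when $\alpha_i = 0$; otherwise, direct computation gives $y_i(g_\alpha - y^\alpha)$, which reduces modulo the $y_j^2$'s to $\sum_{\beta \in P_\alpha^{(i)}} y_i y^\beta$. I identify this remainder as a linear combination of $g_{\alpha''}$ for MCPs of degree $\ell_1(\alpha) + 1$, plus terms that cancel against previously constructed $g_\gamma$'s and the $y_j^2$'s; this is how each new generation of MCP polynomials emerges. The S-polynomial $S(g_\alpha, g_{\alpha'})$ between two MCP polynomials reduces to $0$ via cancellations driven by the combinatorial overlap of $P_\alpha$ and $P_{\alpha'}$, together with the $y_j^2$ and earlier $g_\gamma$ relations.

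For (iii) and termination, distinct MCPs are incomparable in the componentwise order (so no leading term $y^{\alpha'}$ divides another $y^\alpha$), and for every tail $y^\beta$ with $\beta \in P_\alpha$, square-freeness rules out divisibility by any $y_j^2$; moreover, comparing the positions of the $k$-th $1$ in $\alpha$, $\beta$, and a hypothetical divisor $y^{\alpha'}$ — using that $\beta$ lies weakly above $\alpha$ and that in an MCP the $k$-th $1$ is at position $\geq 2k$ for $k < \ell_1$ and exactly $2\ell_1 - 1$ for $k = \ell_1$ — forces $\alpha' = \beta$; but $\beta \in P_\alpha$ is never itself an MCP since its last $1$ sits strictly beyond position $2\ell_1(\alpha) - 1$. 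Termination is automatic because MCPs have degree at most $\lfloor (n+1)/2 \rfloor = \ell + 1$ when $\ell = \lfloor (n-1)/2 \rfloor$, so no further $g_\alpha$'s can appear.

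\noindent\textbf{Main obstacle.} The technically demanding step is the Buchberger reduction for $S(g_\alpha, g_{\alpha'})$ and the precise identification of which S-polynomials $S(g_\alpha, y_i^2)$ produce new MCP generators (often as linear combinations of several $g_{\alpha''}$'s that must be separated in subsequent reduction rounds) versus those that fully reduce to $0$. Managing the resulting combinatorial bookkeeping — tail monomial cancellations across different $g_\gamma$'s and their interactions with the $y_i^2$ relations — so that each loop of the algorithm cleanly reveals the next batch of MCP polynomials, is the main challenge.
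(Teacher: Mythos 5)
Your plan is a faithful sketch of the \emph{naive} Buchberger verification, but it has a genuine gap precisely at the step you flag as the ``main obstacle'': you never show that $S(g_\alpha, g_{\alpha'})$ reduces to zero, and you never carry out the ``bookkeeping'' that identifies which $g_{\alpha''}$ emerge from $S(g_\alpha,y_i^2)$. These are not routine; the paper shows that the remainders $\mathcal{S}_{\alpha,k_i}$ are \emph{not} individually equal to new $g_{\alpha''}$'s --- rather, one must form specific linear combinations governed by an invertible matrix $M_\alpha$ (Lemma~\ref{lemma_phi_map}), and unwinding this requires the full case analysis in \S\ref{path_partition}--\S\ref{results}. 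Asserting that everything ``cancels against previously constructed $g_\gamma$'s'' does not substitute for this.

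More importantly, the paper does \emph{not} verify Buchberger's criterion for the pairs $S(g_\alpha, g_{\alpha'})$ at all, and this is the crucial structural idea you are missing. Instead, it computes only the S-polynomials against the $y_k^2$, shows these suffice to generate all the MCP polynomials, and then shows via Lemma~\ref{lem:Bspan} that the set $B$ of above-diagonal square-free monomials already spans $R/I$ under division by $F_\ell$. The matching lower bound comes from a dimension argument in \S\ref{sec:irreducible}: the Specht polynomials $B'$ give $|B|$ linearly independent elements of the inverse system $H = I^\perp \cong R/I$, forcing $|B| = \dim(R/I)$. This pins the Hilbert function and makes it impossible for any remaining S-polynomial to have a nonzero remainder (that would shrink the spanning set $B$), so Buchberger's criterion holds without ever examining the pairs $(g_\alpha, g_{\alpha'})$. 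Without this dimension count (or an equally powerful substitute), your plan leaves the hardest S-polynomial reductions unresolved, so the proposal as written does not establish the theorem.

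Two smaller remarks: (1) Your claim about $\langle F_\ell \rangle = I$ via $y_1^2 \equiv (y_2+\cdots+y_n)^2 \pmod L$ is essentially the paper's Lemma~\ref{F_ellsequence} and is fine. (2) Your reducedness argument --- no leading MCP monomial divides another, and no tail $y^\beta$ with $\beta\in P_\alpha$ is the leading term of any generator --- is correct in outline but needs the Catalan-prefix counting argument spelled out; the assertion that distinct MCPs are incomparable is true but not entirely obvious.
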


\section{\label{proof}Constructing $F_\ell$ with S-polynomials}
To prove Theorem~\ref{main_theorem2}, we proceed by iteration, inputting $F_\ell$ into a special loop of Buchberger's algorithm to obtain $F_{\ell +1}$.
Each loop uses only well selected  S-polynomials to produce $F_{\ell +1}$, where Lemma~\ref{lemma_phi_map} is the key to this process.
Once we reach $F_{ \lfloor \frac{n-1}{2} \rfloor}$, we turn to  \S\ref{sec:irreducible} to conclude that all other S-polynomials must have zero remainder  after division by $F_{ \lfloor \frac{n-1}{2} \rfloor}$, concluding our proof. This will be done using a dimension argument.
Remark that $I\ne \la F_\ell \ra$ for $\ell\le 0$, hence, we first prove the following lemma.
\begin{lemma} \label{F_ellsequence}
	Fix an integer $n>0$. Then $I=\la F_1\ra$.
\end{lemma}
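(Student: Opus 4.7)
The plan is to first compute the two MCP generators that appear in $F_1$ and then verify each containment by elementary algebra.

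Since the length of an MCP is $2\ell + 1 + m$ with $\ell_1(\alpha) = \ell + 1$, the condition $\ell_1(\alpha) - 1 \le 1$ forces $\ell \in \{0, 1\}$. For $\ell = 0$, the Catalan part $w$ is empty and $\alpha = \texttt{1}(\texttt{0})^{n-1}$; I expect $d = 1$ so $P_{\alpha}$ contains every bitstring of Hamming weight $1$ other than $\alpha$, giving
$$g_{\texttt{10}\cdots \texttt{0}} = y_1 + y_2 + \cdots + y_n.$$
For $\ell = 1$, the only length-$2$ Catalan path is $w = \texttt{01}$, so $\alpha = \texttt{011}(\texttt{0})^{n-3}$ and $d = 3$. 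The constraint forces $\beta_1 = 0$ and $\ell_1(\beta) = 2$, so $P_\alpha$ enumerates all pairs $\{i,j\} \subset \{2,\ldots,n\}$ distinct from $\{2,3\}$, giving
$$g_{\texttt{011}\texttt{0}\cdots\texttt{0}} = \sum_{2 \le i < j \le n} y_i y_j = e_2(y_2, \ldots, y_n).$$
Thus $F_1 = \{y_2^2, \dots, y_n^2,\ y_1 + \cdots + y_n,\ e_2(y_2,\ldots,y_n)\}$.

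For the inclusion $\langle F_1 \rangle \subseteq I$, the only nontrivial check is $e_2(y_2,\ldots,y_n) \in I$. Writing $s = y_1 + \cdots + y_n \in I$, I will expand
$$(y_2 + \cdots + y_n)^2 = (s - y_1)^2 = s^2 - 2 s y_1 + y_1^2,$$
which lies in $I$ because each term is a multiple of an element of the generating set. On the other hand, directly expanding the left side gives $\sum_{i=2}^n y_i^2 + 2\,e_2(y_2,\ldots,y_n)$. Since each $y_i^2 \in I$ for $i \ge 2$ and $\operatorname{char}(\field) = 0$, this yields $e_2(y_2,\ldots,y_n) \in I$.

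For the reverse inclusion $I \subseteq \langle F_1 \rangle$, the only nontrivial check is $y_1^2 \in \langle F_1 \rangle$. I will use the same identity in reverse: since $y_1 = s - (y_2 + \cdots + y_n)$,
$$y_1^2 = s^2 - 2 s (y_2 + \cdots + y_n) + (y_2 + \cdots + y_n)^2 = s^2 - 2s(y_2 + \cdots + y_n) + \sum_{i=2}^n y_i^2 + 2\,e_2(y_2,\ldots,y_n),$$
and every term on the right lies in $\langle F_1 \rangle$. There is no real obstacle here: the lemma is essentially the observation that replacing the single generator $y_1^2$ by the symmetric polynomial $e_2(y_2,\ldots,y_n)$ is an invertible move modulo the hyperplane relation $s = 0$ and the squares $y_i^2$ for $i \ge 2$, which is exactly what the identity $(s - y_1)^2 = s^2 - 2s y_1 + y_1^2$ encodes.
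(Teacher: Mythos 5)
Your proof is correct and is essentially the same as the paper's: you identify the same two MCP generators $g_{\texttt{10}\cdots\texttt{0}} = y_1 + \cdots + y_n$ and $g_{\texttt{011}\texttt{0}\cdots\texttt{0}} = e_2(y_2,\ldots,y_n)$, and both containments are established by the same binomial identity $(s - y_1)^2 = s^2 - 2sy_1 + y_1^2 = \sum_{i\ge 2}y_i^2 + 2e_2(y_2,\ldots,y_n)$, which is exactly the algebraic content of the explicit combinations the paper writes down. Your presentation is arguably a bit more motivated (framing the lemma as the observation that swapping $y_1^2$ for $e_2(y_2,\ldots,y_n)$ is an invertible move modulo $s$ and the remaining squares), but there is no substantive difference in method.
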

\begin{proof} We have that $F_1 = \{g_1, g_{011}, y_2^2, \dots, y_n^2 \}$ where
 	\begin{align*}
 		g_{1} &= y_1 + \sum_{\beta \in P_{1}} y^{\beta} = y_1 + y_2 + \dots + y_n \in I\\
 		g_{011} &= y_2y_3 + \sum_{\beta \in P_{011}}y^\beta = \sum_{2 \leq i < j \leq n}y_iy_j 
	\end{align*}
	To prove {$\la F_1 \ra \sube I $} we need only show that $g_{011} \in I$. Indeed, we have
	$$  g_{011} = \frac{1}{2}(g_1^2 - (y_1^2 + \cdots +y_n^2)) - y_1g_1 + y_1^2  \in I\,.$$
	Conversely, we have   $ y_1^2 =(y_2^2 + \cdots +y_n^2) -g_1^2  + 2(y_1g_1 + g_{011}) \in \la F_1 \ra$.
\end{proof}

Now that this is established, we want to  show that the division algorithm with respect to certain S-polynomials of $F_\ell$ will produce $F_{\ell+1}$.
The next lemma will be very useful for this.
For $g_\alpha\in F_\ell$, we compute {\sl only} the S-polynomials of $g_\alpha$ and $y_j^2\in F_\ell$ for all $j$ such that $\alpha_j=1$, and divide each such S-polynomial by $F_{\ell_1(\alpha)-1}$.
Let us denote the result by
	$$\mathcal{S}_{\alpha, {j}} = \overline{S(g_{\alpha}, y^{2}_{j})}^{F_{\ell_1(\alpha)-1}}$$
\begin{lemma}
	\label{lemma_phi_map} Let $\ell \le \Floor{\frac{n-1}{2}}-1$
	and let $g_\alpha\in F_\ell$.
	Let  $1<k_1 < \dots < k_{\ell_1(\alpha)}= d$ be the positions of all $\alpha_{k_i} = 1$. By definition of $F_\ell$, $\ell_1(\alpha)-1\le\ell$ and $d=2\ell_1(\alpha)-1$.
	There exists an invertible linear map $\color{blue} \phi$ such that:
	\vskip -18pt
	\begin{align*}
	    \textnormal{$\color{blue} \phi$}(\mathcal{S}_{\alpha, k_i}) = \sum_{j=1}^{\ell_1(\alpha)} c_{ij} \mathcal{S}_{\alpha, {k_{j}}} &= g_{\alpha^{(k_i)}}\,,
	\end{align*}
	where $\alpha^{(k_i)} = \alpha_1 \cdot\cdot\, \alpha_{k_{i}-\!1} \textnormal{\texttt{0}} \alpha_{k_{i}+\!1} \cdot\cdot\, \alpha_{d}  
	\textnormal{\texttt{11}} (\textnormal{\texttt{0}})^{n-\!d-\!2}$.
	The $\field$-matrix $M_\alpha=[c_{ij}]$ is invertible.
\end{lemma}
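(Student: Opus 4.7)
The plan is to compute $\mathcal{S}_{\alpha, k_i}$ explicitly by expanding the S-polynomial and reducing by $F_{m-1}$ (with $m = \ell_1(\alpha)$), then to reorganize the result as a specific linear combination of the target polynomials $g_{\alpha^{(k_j)}}$; the map $\phi$ and its coefficient matrix then arise by inverting this combination.

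\textbf{Computing the S-polynomial modulo squares.} Since $\mathrm{LT}(g_\alpha) = y^\alpha$ and $\alpha_{k_i} = 1$,
\[
S(g_\alpha, y_{k_i}^2) \;=\; y_{k_i}\, g_\alpha \,-\, (y^\alpha/y_{k_i})\, y_{k_i}^2 \;=\; \sum_{\beta \in P_\alpha} y_{k_i}\, y^\beta.
\]
Terms with $\beta_{k_i}=1$ contain $y_{k_i}^2$ and vanish after reducing by $\{y_2^2,\dots,y_n^2\}$, leaving
\[
\tilde{\mathcal{S}}_{\alpha, k_i} \;=\; \sum_{\beta \in P_\alpha^{(k_i)}} y_{k_i}\, y^\beta,
\]
a sum of squarefree monomials of degree $m+1$.

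\textbf{Decomposing the result and inverting.} I would continue the reduction by the remaining elements of $F_{m-1}$ (the polynomials $g_{\alpha'}$ with $\ell_1(\alpha') \le m$) and show that
\[
\mathcal{S}_{\alpha, k_i} \;=\; \sum_{j=1}^{m} N_{ij}\, g_{\alpha^{(k_j)}}
\]
for explicit integer coefficients $N_{ij}$. The coefficient $N_{ij}$ is detectable by examining the leading monomial $y^{\alpha^{(k_j)}}$: for $\ell \ne j$, the path $\alpha^{(k_j)}$ is not weakly above $\alpha^{(k_\ell)}$ (position $k_\ell$ carries a $1$ in $\alpha^{(k_j)}$ but a $0$ in $\alpha^{(k_\ell)}$, violating the constraint defining $P_{\alpha^{(k_\ell)}}$), so $y^{\alpha^{(k_j)}}$ appears only in $g_{\alpha^{(k_j)}}$ among the target polynomials, and $N_{ij}$ equals the coefficient of $y^{\alpha^{(k_j)}}$ in $\mathcal{S}_{\alpha, k_i}$. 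The lemma's $c_{ij}$ are then the entries of $N^{-1}$, so $M_\alpha = N^{-1}$. Invertibility of $N$ is a direct computation: the symmetry in the indices $k_1,\dots,k_m$ suggests $N$ has the form $aI + bJ$ (equal diagonal and equal off-diagonal entries), with determinant $a^{m-1}(a+mb)$. The smallest case $m=2$, $\alpha = 01100$, confirms $N = -I - J$ with determinant $3$. The hypothesis $k_1 > 1$ forces $\alpha$ to begin with a north step, hence $m \ge 2$, placing us in the range where $N$ is invertible.

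\textbf{Main obstacle.} The principal difficulty is verifying the asserted decomposition $\mathcal{S}_{\alpha, k_i} = \sum_j N_{ij}\, g_{\alpha^{(k_j)}}$. Each reduction by a lower-weight $g_{\alpha'}$ in $F_{m-1}$ introduces new monomials (via the sums over $P_{\alpha'}$), and one must verify that all these contributions telescope exactly into the non-leading terms of the $g_{\alpha^{(k_j)}}$'s with the coefficients $N_{ij}$ identified above. This is essentially a path-enumeration identity, tracking how the monomials of $\tilde{\mathcal{S}}_{\alpha, k_i}$ are absorbed into the $g_{\alpha^{(k_j)}}$-blocks. Once this combinatorial accounting is settled, the invertibility calculation is routine linear algebra.
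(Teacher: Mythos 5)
Your scaffold matches the paper's exactly: you identify the same coefficient matrix (your $N$ is the paper's $A_\alpha$, with $N = -I - J$, where $-2$ sits on the diagonal and $-1$ off), the same leading-monomial argument to read off its entries ($y^{\alpha^{(k_j)}}$ appears only in $g_{\alpha^{(k_j)}}$ among the target polynomials), the same invertibility calculation via $\det(aI+bJ) = a^{m-1}(a+mb)$, and the same definition of $\phi$ by inverting. Your $m=2$ sanity check at $\alpha=\texttt{01100}$, $n=5$ is also correct.

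However, the proposal has a genuine gap, which you candidly flag as ``the principal difficulty'': the assertion that the fully reduced remainder $\mathcal{S}_{\alpha,k_i} = \overline{S(g_\alpha,y_{k_i}^2)}^{F_{\ell_1(\alpha)-1}}$ actually lies in the $\field$-span of $\{g_{\alpha^{(k_1)}},\dots,g_{\alpha^{(k_m)}}\}$ with coefficients given by $N$. Reading off $N_{ij}$ from leading monomials only tells you what the decomposition \emph{would} be if it existed; it does not establish that the remaining (non-leading) monomials of $\mathcal{S}_{\alpha,k_i}$ sum to exactly $\sum_j N_{ij}\bigl(g_{\alpha^{(k_j)}}-y^{\alpha^{(k_j)}}\bigr)$ with nothing left over. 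This is where essentially all the work is: the division by $F_{\ell_1(\alpha)-1}$ involves $g_\alpha$ itself (since $g_\alpha \in F_{\ell_1(\alpha)-1}$), which injects the entire sum $\sum_{\beta\in P_\alpha} y^\beta$ weighted by $\sum_{j>d} y_j$, and one must track how those monomials recombine after the further reductions by $\{y_2^2,\dots,y_n^2\}$. The paper handles this with a stratified decomposition of the monomials by the prefix $u \in U_{\alpha,r}$ (\S\ref{path_partition}), a two-part reduction analysis (\S\ref{beta_prime_redu}--\S\ref{beta_redu}) distinguishing the single MCP term from the terms that stay strictly above the diagonal, and a final coefficient-by-coefficient verification (Cases~\ref{vk=0} and~\ref{vk=1}, then the two inclusions at the end of \S\ref{results}). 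In particular, the ``reverse inclusion'' step — showing that $\phi(\mathcal{S}_{\alpha,k_i})$ produces no monomials outside the support of $g_{\alpha^{(k_i)}}$ — is not automatic and requires the change of variables between $U_{\alpha,r-1}$ and $U_{\alpha^{(k_i)},s}$ worked out there. Until that bookkeeping is carried out, the lemma is conjectured rather than proved.
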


The proof of this lemma is technical and will be done by cases in the sections \S\ref{path_partition}--\S\ref{results}.
Once it is established, the invertibility of the matrix $M_\alpha$ shows that the $g_{\alpha^{(k_i)}}$ can be obtained from the $\mathcal{S}_{\alpha, k_i}$
by a sequence of well chosen elementary Gaussian operations that mimic S-polynomials and divisions. The polynomials $g_{\alpha^{(k_i)}}$ have distinct leading terms 
and are fully reduced; they are in the output of this loop of the Buchberger's algorithm and nothing else is produced from the $\mathcal{S}_{\alpha, k_i}$.
To visualize our case analysis, it will be useful to develop a good graphical representation of the paths.

\medskip
\noindent
\textbf{Visualization.} Given $g_\alpha\in F_\ell$, we have $\alpha = w1(0)^{n-2l-1}$ is an MCP for $\ell(w)= 2l \leq \ell $. 
Let $d=2l+1=2\ell_1(\alpha)-1$ be the position of the last $1$ in $\alpha$.
For $\beta \in P_\alpha^{(k)}$ recall that $\beta_k =0$ and $k$ is chosen such that $\alpha_k=1$. 
Let  $L_{k}$ be the anti-diagonal line intersecting the end of $k^{th}$ position of $\alpha$, and similarly for $L_{d}$  which intersects the last $1$ in $\alpha$. 
Then any such $\beta$ we must have a north step, ending at some position on the line $L_k$. This is visualized as in Example~\ref{p_alpha_k_exmaple}.
\begin{example} \label{p_alpha_k_exmaple}
		Suppose we have $\color{green!60!black} k=4$ and $\color{red}{\alpha} = \texttt{010110$\cdots$0}$. Then $\color{blue!60} d=5$. The path $\color{red}{\alpha}$ and lines  
		$\color{green!60!black} L_k$ and $\color{blue!60} L_{d}$ are visualized as
$$ 
\begin{tikzpicture}[scale=0.4]
	  \draw[help lines] (0,0) grid +(6,6);
  	  \draw[solid, thick] (0,0) -- +(6,6);
	  \draw[draw = red, line width=1.5pt] (0,0)--(0,1)--(1,1)--(1,2)--(2,2)--(3,2);
	  \draw[densely dashed,  line width=1.5pt, draw=red] (3,2) -- (3,6); 
	  \draw[draw=green!60!black,  thick] (0.5, 3.5) -- (4,0);
	  \node at (0.5, 3.5) [circle, fill = green!80!black, scale=0.5]{$\scriptstyle L_k$};
	  \draw[draw=blue!40,  thick] (0.5, 4.5) -- (4,1);
	  \node at (0.5, 4.5) [circle, fill = blue!30, scale=0.5]{$\scriptstyle L_{d}$};
	  \node at (3,2) [circle, fill=blue!40, scale=0.4]{}; 
	  \node at (2,2) [circle, fill=green!60!black, scale=0.4]{}; 
      \end{tikzpicture}
$$
The set $P^{(\text{\textcolor{green!60!black}{4}})}_{\color{red}{\alpha}} $ decomposes into four different possibilities depending on the first $d$ entries of  $\beta=\beta_1\beta_2\beta_3\beta_4\beta_5\cdots$:
$$ P^{(\text{\textcolor{green!60!black}{4}})}_{\color{red}{\alpha}} = \big\{
	\texttt{010\textcolor{green!60!black}{0}1} |  \cdot \cdot  \texttt{1}  \cdots;\ 
	\texttt{010\textcolor{green!60!black}{0}0} |  \cdot \cdot  \texttt{1} \cdot \cdot  \texttt{1} \cdots ;\ 
	\texttt{000\textcolor{green!60!black}{0}1} |  \cdot \cdot  \texttt{1} \cdot \cdot \texttt{1}  \cdots;\ 
	\texttt{000\textcolor{green!60!black}{0}0} |  \cdot \cdot  \texttt{1}  \cdot \cdot  \texttt{1} \cdot \cdot \texttt{1} \cdots\big\}
$$ 
We visualize this as	
$$
\begin{tikzpicture}[scale=0.4]
	 \draw[help lines] (0,0) grid +(4,6);
  	 \draw[solid, thick] (0,0) -- +(4,4);
	 \draw[draw = red] (0,0)--(0,1)--(1,1)--(1,2)--(2,2)--(3,2);
	 \draw[densely dashed, draw=red] (3,2) -- (3,6); 
	 \draw[draw=green!60!black] (0,4) -- (3,1);
	 \draw[draw=blue!40] (0,5) -- (4,1);
	\draw[draw = cyan,very thick] (0,0)--(0,1)--(1,1)--(1,2);
	\draw[draw = blue!100, very  thick] (1,2) -- (1,3); 
	\draw[draw = cyan,very thick] (1,3)--(2,3);
	\draw[draw = cyan, dashed, very thick] (2,3) -- (3,6); 
	\node at (3, 0.5) [rounded rectangle, opacity = 0.5, fill = cyan, scale=0.7]{$\beta = $\texttt{010\textcolor{green!40!black}{0}1}$\cdot\cdot$};
	\node at (3, 0.5) [scale=0.7]{$\beta = $\texttt{010\textcolor{green!40!black}{0}1}$\cdot\cdot$};
	\node at (1, 2) [circle, fill=blue, scale=0.3]{}; 
	\node at (1, 3) [circle, fill=blue, scale=0.3]{}; 
\end{tikzpicture} \,
\qquad
\begin{tikzpicture}[scale=0.4]
	\draw[help lines] (0,0) grid +(4,6);
	\draw[solid, thick] (0,0) -- +(4,4);
	\draw[draw = red] (0,0)--(0,1)--(1,1)--(1,2)--(2,2)--(3,2);
	\draw[densely dashed, draw=red] (3,2) -- (3,6); 
	\draw[draw=green!60!black] (0,4) -- (3,1);
	\draw[draw=blue!40] (0,5) -- (4,1);
	\draw[draw = cyan,very thick] (0,0)--(0,1)--(1,1)--(1,2);
	\draw[draw = blue!100, very  thick] (1,2) -- (1,3); 
	\draw[draw = cyan,very thick] (1,3)--(1,4);
	\draw[draw = cyan, dashed, very thick] (1,4) -- (3,6); 
	\node at (3, 0.5) [rounded rectangle, opacity = 0.5, fill = cyan, scale=0.7]{$\beta = $\texttt{010\textcolor{green!40!black}{0}0}$\cdot\cdot$};
	\node at (3, 0.5) [scale=0.7]{$\beta = $\texttt{010\textcolor{green!40!black}{0}0}$\cdot\cdot$};
	\node at (1, 2) [circle, fill=blue, scale=0.3]{}; 
	\node at (1, 3) [circle, fill=blue, scale=0.3]{}; 
\end{tikzpicture}\,
\qquad
\begin{tikzpicture}[scale=0.4]
	\draw[help lines] (0,0) grid +(4,6);
	\draw[solid, thick] (0,0) -- +(4,4);
	\draw[draw = red] (0,0)--(0,1)--(1,1)--(1,2)--(2,2)--(3,2);
	\draw[densely dashed, draw=red] (3,2) -- (3,6); 
	\draw[draw=green!60!black] (0,4) -- (3,1);
	\draw[draw=blue!40] (0,5) -- (4,1);
	\draw[draw = cyan,very thick] (0,0)--(0,1)--(0,2)--(0,3);
	\draw[draw = blue!100, very  thick] (0,3) -- (0,4); 
	\draw[draw = cyan,very thick] (0,4)--(1,4);
	\draw[draw = cyan, dashed, very thick] (1,4) -- (3,6); 
	\node at (3, 0.5) [rounded rectangle, opacity = 0.5, fill = cyan, scale=0.7]{$\beta = $\texttt{000\textcolor{green!40!black}{0}1}$\cdot\cdot$};
	\node at (3, 0.5) [scale=0.7]{$\beta = $\texttt{000\textcolor{green!40!black}{0}1}$\cdot\cdot$};
	\node at (0, 3) [circle, fill=blue, scale=0.3]{}; 
	\node at (0, 4) [circle, fill=blue, scale=0.3]{}; 
\end{tikzpicture}\,
\qquad
\begin{tikzpicture}[scale=0.4]
	\draw[help lines] (0,0) grid +(4,6);
	\draw[solid, thick] (0,0) -- +(4,4);
	\draw[draw = red] (0,0)--(0,1)--(1,1)--(1,2)--(2,2)--(3,2);
	\draw[densely dashed, draw=red] (3,2) -- (3,6); 
	\draw[draw=green!60!black] (0,4) -- (3,1);
	\draw[draw=blue!40] (0,5) -- (4,1);
	\draw[draw = cyan,very thick] (0,0)--(0,1)--(0,2)--(0,3);
	\draw[draw = blue!100, very  thick] (0,3) -- (0,4); 
	\draw[draw = cyan,very thick] (0,4)--(0,5);
	\draw[draw = cyan, dashed, very thick] (0,5) -- (3,6); 
	\node at (3, 0.5) [rounded rectangle, opacity = 0.5, fill = cyan, scale=0.7]{$\beta = $\texttt{000\textcolor{green!40!black}{0}0}$\cdot\cdot$};
	\node at (3, 0.5) [scale=0.7]{$\beta = $\texttt{000\textcolor{green!40!black}{0}0}$\cdot\cdot$};
	\node at (0, 3) [circle, fill=blue, scale=0.3]{}; 
	\node at (0, 4) [circle, fill=blue, scale=0.3]{}; 
\end{tikzpicture} \,.  
$$
\end{example}
	
From Example~\ref{p_alpha_k_exmaple}, it is evident that $P^{(4)}_{\alpha}$ decomposes  according to the first $d$ entries of $\beta\in P^{(4)}_{\alpha}$. The remaining entries of  
$\beta$ are only restricted by the number of ones. Also note that the number of zeros in $\beta_1 \cdots \beta_{d}$  determines the position where $\beta$ crosses the line $L_{d}$. Indeed, let
$$	\ell_0(\beta_1 \cdots \beta_{d}) = q\quad\text{and}\quad 	\ell_1(\beta_1 \cdots \beta_{d} ) = d - q \,,$$
then the path $\beta$ intersects the line $L_{d}$ at the point $(q, d-q)$.

\subsection{Decomposing $P_\alpha$} \label{path_partition}
In this section, we decompose the set $P_\alpha$ according to first $d=2\ell_1(\alpha)-1$ entries of the path $\beta\in P_\alpha$.
\begin{definition} \label{u_alphar}
	Let $\alpha$ be an MCP with $d = 2\ell_1(\alpha)-1$. For $1 \leq r \leq \ell_1(\alpha)$, we define
		\begin{equation}
		U_{\alpha, r} = \big \{ u\in \{0,1\}^d \big | ( \alpha_i = 0 \implies u_i = 0)_{1 \leq i \leq d}\  \text{ and }\   \ell_1(u) = \ell_1(\alpha) -r  \big \}.
		 \label{u_alpha_eqn}
	\end{equation}
	Given $u \in U_{\alpha,r}$, we define
	\begin{equation}
		P_{u, \alpha} = \big\{ \beta\in P_\alpha \big|  \beta_{1} \cdots \beta_d=u  \big\}. 
		\label{p_u_alpha_eqn}
	\end{equation}
\end{definition}
\begin{example} \label{u_alpha_example}
	Let $\alpha = \texttt{010110} \cdots \texttt{0}$, then $\ell_1(\alpha)=3$. For $r =1$, we have
		\begin{equation*} 
		U_{\texttt{010110} \cdots \texttt{0},1} = \{\texttt{01010, 01001, 00011}\}\,.
	\end{equation*}
\end{example} 
The subsets $P_{u, \alpha}$ are disjoint, so we can partition $P_\alpha = \bigcup_{r = 1}^{\ell_1(\alpha)} \bigcup_{u \in U_{\alpha,r}} P_{u,\alpha}$. Therefore, 
\begin{equation} \label{y_beta_sum}
	\sum_{\beta \in P_\alpha} y^ \beta   = \sum_{r=1}^{\ell_1(\alpha)} \sum_{u \in U_{\alpha,r}} y^u
										\sum_{d+1 \leq i_1 < \cdots < i_r \leq n} y_{i_1} \cdots y_{i_{r}}   \,.
\end{equation} 
We remark that for $\beta\in P_{u, \alpha}$, the only restriction on $\beta_{d+1}\cdots\beta_n$ is that $\ell_1(\beta_{d+1}\cdots\beta_n)=\ell_1(\beta)-\ell_1(u)=r$.
Hence, in the last  summation in Equation~\eqref{y_beta_sum},  we are summing over all square-free monomials of degree $r$ in the variables $y_{d+1},\dots,y_n$.

%
%
%

\begin{definition} \label{u_alpha_one_k}
     For $\alpha_k=1$, let $u(\alpha, k) = \alpha_1 \cdots \alpha_{k-1}\texttt{0}\alpha_{k+1} \cdots \alpha_{d}\in U_{\alpha, 1}$.
\end{definition}

\subsection{Computing  $\overline{S(g_\alpha, y_k^2)}^{F_{-1}} $} \label{special_paths} We first investigate the S-polynomial we want to compute, dividing only by $F_{-1}=\{y_2^2,\ldots,y_n^2\}$.
\begin{lemma}
	    \label{spoly_lemma}
	    Let $g_\alpha \in F_\ell$ and
	     given any $1< k\le 2\ell_1(\alpha)-1$, such that $\alpha_k=1$.  ($\alpha_1$ is always 0). We have
	    \begin{equation} \label{spoly_lemma_eqn}
    	    	\overline{S(g_\alpha, y_k^2)}^{F_{-1}} = \sum_{\beta \in P_{\alpha}^{(k)}} y^\beta y_k.
	    \end{equation}
    \end{lemma}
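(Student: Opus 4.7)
The plan is a direct computation of the S-polynomial followed by a careful reduction modulo $F_{-1} = \{y_2^2, \ldots, y_n^2\}$. The key observation is that $g_\alpha$ has leading monomial $y^\alpha$ (by Remark~\ref{rem:lex}) and that $\alpha_k = 1$, so the lcm of the two leading monomials is simply $y^\alpha y_k$. This makes the S-polynomial especially clean to write down.

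First I would compute
\begin{equation*}
S(g_\alpha, y_k^2) \;=\; y_k \, g_\alpha \;-\; \frac{y^\alpha}{y_k}\cdot y_k^2 \;=\; y_k\!\left(y^\alpha + \sum_{\beta \in P_\alpha} y^\beta\right) - y^\alpha y_k \;=\; \sum_{\beta \in P_\alpha} y^\beta y_k.
\end{equation*}
So all the work reduces to understanding how each term $y^\beta y_k$ behaves under division by $F_{-1}$. Since $\beta \in \{0,1\}^n$, I would split the sum into two parts according to whether $\beta_k = 0$ or $\beta_k = 1$, which gives the disjoint decomposition $P_\alpha = P_\alpha^{(k)} \sqcup \{\beta \in P_\alpha : \beta_k = 1\}$.

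Next I would handle each part separately. If $\beta_k = 1$, then $y^\beta y_k = (y^\beta / y_k)\cdot y_k^2$, and the hypothesis $k > 1$ ensures that $y_k^2 \in F_{-1}$; one division step produces remainder $0$, and because the quotient $y^\beta/y_k$ is a square-free monomial (being a product of distinct variables $y_i$ with $i \neq k$), no further generator of $F_{-1}$ can divide anything that arises. If instead $\beta_k = 0$, then $y^\beta y_k$ is itself a square-free monomial, and no $y_j^2$ divides a square-free monomial, so the term is already in normal form. Collecting the surviving terms yields exactly
\begin{equation*}
\overline{S(g_\alpha, y_k^2)}^{F_{-1}} \;=\; \sum_{\beta \in P_\alpha,\ \beta_k = 0} y^\beta y_k \;=\; \sum_{\beta \in P_\alpha^{(k)}} y^\beta y_k,
\end{equation*}
as claimed.

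I do not expect any serious obstacle here; this is essentially a bookkeeping lemma whose content is a clean reformulation of an S-polynomial reduction. The only subtlety worth stating explicitly in the write-up is why the division by $F_{-1}$ terminates at this stage, namely that every summand is at worst divisible by a single $y_k^2$, and the quotient after that single division step is square-free. The hypothesis $k > 1$ (equivalently, $k \geq 2$) is precisely what is needed so that $y_k^2$ belongs to $F_{-1}$; the fact that $\alpha_1 = 0$ always, noted parenthetically in the statement, is a consequence of $\alpha$ being an MCP (whose first nonzero entry can never occupy position $1$ if it is to stay weakly above the diagonal with an initial north step).
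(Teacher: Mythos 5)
Your proof is correct and takes essentially the same route as the paper: compute the S-polynomial directly (exploiting $\alpha_k = 1$ to get $\mathrm{lcm} = y^\alpha y_k$), then observe that division by $F_{-1}$ kills exactly the terms with $\beta_k = 1$ and leaves the square-free terms with $\beta_k = 0$ untouched. Your write-up is somewhat more explicit about why the reduction terminates, but the content is identical.
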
 
\begin{proof} 
	Since $\alpha_k =1$, we have $\text{LCM}(\text{LM}(g_\alpha), y_k^2) = y^\alpha y_k$ and the result is as follows
	$$
		\overline{S(g_\alpha, y_k^2)}^{F_{-1}}   =  \overline{y_k g_\alpha -y^\alpha y_k}^{F_{-1}}  
		=  \overline{\sum_{\beta \in P_{\alpha}} y^\beta y_k }^{F_{-1}} 
		=\sum_{\beta \in P_{\alpha}^{(k)}} y^\beta y_k\,.
	$$
	The last equality follows from the fact that any monomial $\overline{y^\beta y_k }^{F_{-1}}=0$, if and only if $\beta\in P_\alpha\setminus  P_{\alpha}^{(k)}$.
\end{proof}

For $g_\alpha\in F_\ell$ and $k$ such that
$\alpha_k = 1$, Lemma~\ref{spoly_lemma} and $P_\alpha^{(k)} = \bigcup_{r = 1}^{\ell_1(\alpha)} \bigcup_{u \in U_{\alpha,r},\,u_k=0 } P_{u,\alpha}$ gives
\begin{equation}
 \label{spoly_lemma_eqn2}
	\overline{S(g_\alpha, y_k^2)}^{F_{-1}} = \sum_{\beta \in P_\alpha^{(k)}} y^\beta y_k  = \sum_{r=1}^{\ell_1(\alpha)}\sum_{u \in U_{\alpha,r}\atop u_k=0 \hfill} y^{u}y_k \sum_{d+1 \leq i_1 < \cdots < i_r \leq n} y_{i_1} \cdots y_{i_{r}}  
\end{equation} 
whereas before $d=2\ell_1(\alpha)-1$. Now we need to divide $\overline{S(g_\alpha, y_k^2)}^{F_{-1}}$ by $F_{\ell_1(\alpha)-1}$. We will do this in two separate cases: the terms of this equation for $r=1$ and those for $r>1$.

\subsection{Division of the $r=1$ term in Equation~\eqref{spoly_lemma_eqn2} by $F_{\ell_1(\alpha)-1}$}  \label{beta_prime_redu} 
When $r=1$ in Equation~\eqref{spoly_lemma_eqn2}, there is only a single $u\in U_{\alpha,1}$ such that $u_k=0$, namely $u(\alpha,k)$ as in Definition~\ref{u_alpha_one_k}.
The $r=1$ term in Equation~\eqref{spoly_lemma_eqn2} is 
$$
	 y^{u(\alpha,k)}y_k \sum_{d+1 \leq j \leq n}  y_{j}  = \sum_{j=d+1}^n y^\alpha y_j 
$$
The terms in the sum above are monomial factors of $\text{LT}(g_\alpha) = y^\alpha$.  Using the Division Algorithm we can divide the expression further using $ g_\alpha$:
    \begin{align*}
    \overline{ \sum_{j=d+1}^n y^\alpha y_j }^{\{g_\alpha\}} &=  \sum_{j=d+1}^n y^\alpha y_j - g_\alpha \sum_{j=d+1}^n y_j  \\
    &=  -\sum_{\gamma \in P_\alpha}y^\gamma \sum_{j=d+1}^n y_j \\ 
    &= - \sum_{s=1}^{\ell_1(\alpha)} \sum_{v \in U_{\alpha,s}} y^v \sum_{d+1 \leq i_1 < \cdots < i_s \leq n}y_{i_1} \cdots y_{i_s} \sum_{j=d+1}^n y_j
\end{align*}
We use Equation~\eqref{y_beta_sum} in the last equality. We can further divide this last expression using $F_{-1}$ and we get
\begin{equation}\label{eq:thefirstr1}
    \overline{ \sum_{j=d+1}^n y^\alpha y_j }^{\{g_\alpha,F_{-1}\}} 
    = - \sum_{s=1}^{\ell_1(\alpha)} (s+1) \sum_{v \in U_{\alpha,s}} y^v \sum_{d+1 \leq i_1 < \cdots < i_{s+1} \leq n}y_{i_1} \cdots y_{i_s} y_{i_{s+1}} \\
\end{equation}
This result follows from the fact that 
\begin{align*}
   \sum_{d+1 \leq i_1 < \cdots < i_s \leq n} \hskip-12pt y_{i_1} \cdots y_{i_s} \sum_{j=d+1}^n y_j = (s+1)\hskip-24pt \sum_{d+1 \leq i_1 < \cdots < i_{s+1} \leq n} \hskip-12pt y_{i_1} \cdots y_{i_s} y_{i_{s+1}} + \text{terms containing $y_j^2$}
\end{align*}
Once again, we want to separate the $r=1$ terms in Equation~\eqref{eq:thefirstr1} and those for $r \geq 2$.

\begin{case}[{\it $s=1$ in Equation~\eqref{eq:thefirstr1}}\null] \label{r=1}
In the case of $s = 1$ in Equation~\eqref{eq:thefirstr1} and $v \in U_{\alpha, 1}$, we can represent any path $ y^{v}y_{i_1}y_{i_2}$ visually as in
Figure~\ref{Fig:case1}.
For each $v \in U_{\alpha, 1}$, we have $\ell_1(v) =\ell_1(\alpha) - 1$ and let $k$ be the unique position, where $v_k=0$ and $\alpha_k=1$.
The paths of the monomial $y^{v}y_{i_1}y_{i_2}$ in the sum have two ones after the $d^{th}$ position.
If we select $i_1 = d+1$ and $i_2 = d+2$, this will form a  {\sl new} MCP path
$\alpha^{(k)}=\alpha_1 \cdot\cdot\, \alpha_{k-\!1} \textnormal{\texttt{0}} \alpha_{k+\!1} \cdot\cdot\, \alpha_{d}  \textnormal{\texttt{11}} (\textnormal{\texttt{0}})^{n-\!d-\!2}$, 
as displayed above. All the other paths in this case remain above the diagonal. There are no more possible divisions using $F_{\ell_1(\alpha)-1}$.
The MCP path is actually new for $F_\ell$ if $\ell_1(\alpha)-1=\ell$ as  $\ell_1(\alpha^{(k)})-1>\ell$.
\begin{figure}[htpb]
\begin{tikzpicture}[scale = 0.4, trim left=-2cm]
	\draw[help lines] (0,0) grid +(9,9);
	\draw[solid, thick] (0,0) -- +(9,9);
	\draw[draw = red,  thick] (0,0) -- (0,2) -- (1,2)--(1,3) -- (2,3)--(2,4)--(5,4); 
	\draw[draw = red,  thick, densely dashed] (5,4) -- (5,9); 
  	\draw[draw = cyan, opacity=0.4, line width = 3pt] (0,0) -- (0,2); 
  	\draw[draw=cyan, very thick, dashed, -latex] plot [smooth] coordinates {(0,2) (1.3, 4)  (4,5)}; 
	\draw[draw=blue!40, very thick] (0, 9) -- (5,4) -- (7.5, 1.5); 
 	\node at (7.5, 1.5) [circle, fill = blue!30, scale=0.5]{$ L_{d}$}; 
  	\draw [->,decorate, thick, decoration={snake,amplitude=.4mm,segment length=2mm,post length=1mm}] (0,6) -- (1.3,4);
 	\node at (0,6) [circle, fill = cyan!40, draw = black, scale = 1]{{$\scriptstyle v$}};
	\draw [->,decorate, thick, decoration={snake,amplitude=.4mm,segment length=2mm,post length=1mm}] (3,7.5) -- (4.5,6);
	\node at (3,7.5) [rounded rectangle, , fill = cyan!40, draw = black, scale = 0.7]{{$y_{i_1}y_{i_2}$}};
	\draw[draw = violet, very thick] (4,5) -- (6,5); 
	\draw[draw = cyan, dashed, very thick, -latex] (4,5) -- (6,8); 
	\draw[draw = cyan, dashed, very thick] (6,5) -- (6,8); 
	\node at (4, 5) [circle, fill = cyan, scale=0.4]{};
	\node at (6, 5) [circle, fill = violet, scale=0.4]{};
	\draw [->,decorate, thick, decoration={snake,amplitude=.4mm,segment length=2mm,post length=1mm}] (10,5) -- (6,5);
	\node at (11, 5) [rounded rectangle, draw = violet, fill = white, scale = 0.9]{$s=1\colon \text{ New MCP for}\atop  y_{i_1}y_{i_2}=y_{d+1}y_{d+2}$};
\end{tikzpicture}
\ 
\begin{tikzpicture}[scale = 0.4, trim left=-2cm]
	\draw[help lines] (0,0) grid +(9,9);
	\draw[solid, thick] (0,0) -- +(9,9);
	\draw[draw = red,  thick] (0,0) -- (0,2) -- (1,2)--(1,3) -- (2,3)--(2,4)--(5,4); 
	\draw[draw = red,  thick, densely dashed] (5,4) -- (5,9); 
  	\draw[draw = cyan, opacity=0.4, line width = 3pt] (0,0) -- (0,2); 
	\draw[draw=blue!40, very thick] (0,9) -- (5,4) -- (7.5, 1.5); 
 	\node at (7.5, 1.5) [circle, fill = blue!30, scale=0.5]{$ L_{d}$}; 
	\draw[draw=blue!50!green,  thick, dashed, -latex] plot [smooth] coordinates {(0,2) (1.3, 5)  (3,6)}; 
	\draw[draw=orange,  thick, dashed, -latex] plot [smooth] coordinates {(0,2) (0.8, 5.6)  (2,7)}; 
	\draw[draw=magenta,  thick, dashed, -latex] plot [smooth] coordinates {(0,2) (0.4, 5.8)  (1,8)}; 
	\draw[draw=blue, thick, dashed, -latex] plot [smooth] coordinates {(0,2) (0.2, 5.9)  (0.4,7.8)}; 
	\draw[draw=blue!50!green,  thick, dashed, -latex] (3,6)--(6,8); 
	\draw[draw=orange,  thick, dashed, -latex] (2,7)--(6,8); 
	\draw[draw = blue!50!green,  thick] (3,6) -- (6,6); 
	\draw[draw = orange,  thick] (2,7) -- (6,7); 
	\node at (3, 6) [circle, fill = blue!50!green, scale=0.4]{}; 
	\node at (6, 6) [circle, fill = blue!50!green, scale=0.4]{}; 
	\node at (2, 7) [circle, fill = orange, scale=0.4]{}; 
	\node at (6, 7) [circle, fill = orange, scale=0.4]{}; 
	\node at (10, 5.9)[rounded rectangle, draw = blue!50!green, fill = white, scale = 0.7]{{$s=2\colon y_{i_1}y_{i_2}y_{i_3}$}};
	\node at (10.4, 7.1)[rounded rectangle, draw = orange, fill = white, scale = 0.7]{{$s=3\colon y_{i_1}y_{i_2}y_{i_3}y_{i_4}$}};
	\node at (10, 8.5)[scale = 0.6]{$\mathbf \vdots$};
	\draw[draw = cyan, dashed, very thick] (6,6) -- (6,8); 
\end{tikzpicture}
            \caption{On the left is Case~\ref{r=1} and on the right is Case~\ref{rgeq2}}
            \label{Fig:case1}
\end{figure} 
\end{case}

\begin{case}[{\it $s \geq 2$ in Equation~\eqref{eq:thefirstr1}}\null] \label{rgeq2}
    Similarly, in the case of $s \geq 2$ and $v \in U_{\alpha, s}$, we can visualize any such path $y^{v}y_{i_1} \cdots y_{i_{s+1}}$ as in Figure~\ref{Fig:case1}.
All such paths clearly remain above the diagonal and no further divisions using $F_{\ell_1(\alpha)-1}$ are possible.
\end{case}	

\subsection{Division of $r>1$ terms in Equation~\eqref{spoly_lemma_eqn2} by $F_{\ell_1(\alpha)-1}$} \label{beta_redu}
We now focus on the  $r>1$ terms in Equation~\eqref{spoly_lemma_eqn2}. 
Fix $r>1$ and fix $u \in U_{\alpha, r}$ such that $u_k=0$. We are considering a monomial of the form 
$y^{u}y_k y_{i_1} \cdots y_{i_{r}}  $, where $d+1 \leq i_1 < \cdots < i_r \leq n$.
Note that $\alpha_k = 1$ and $u_k =0$, and there is at least one more entry such that $\alpha_j = 1$ and $u_j = 0$, for $j \ne k$. Hence, there are \textit{at least} two entries in which $u$ differs from $\alpha$. We further split $r>1$ terms into two cases, where $r=2$ or $r>2$.
We visualize this in Figure~\ref{Fig:case2}.
\begin{figure}[htpb]
\begin{align*}
\begin{tikzpicture}[scale = 0.4,trim left=-2cm]
	\draw[help lines] (0,0) grid +(9,9);
	\draw[solid, thick] (0,0) -- +(9,9);
	\draw[draw = red,  thick] (0,0) -- (0,2) -- (1,2)--(1,3) -- (2,3)--(2,4)--(5,4); 
	\draw[draw = red,  thick, densely dashed] (5,4) -- (5,9); 
  	\draw[draw = cyan, opacity=0.4, line width = 3pt] (0,0) -- (0,2); 
	\draw[draw=blue!40,  thick] (0,9) -- (5,4) -- (7.5, 1.5); 
 	\node at (7.5, 1.5) [circle, fill = blue!30, scale=0.5]{$ L_{d}$}; 
	\draw[draw=green!60!black,  thick] (0,7) -- (5.5,1.5); 
 	\node at (5.5,1.5) [circle, fill = green!80!black, scale=0.5]{$ L_{k}$}; 
	\draw[draw=cyan,  thick, dashed, -latex] plot [smooth] coordinates {(2,5) (3,6)}; 
	\draw[draw=cyan,  thick, dashed, -latex] plot [smooth] coordinates {(1,6) (2, 6)  (3,6)}; 
	\draw[draw=cyan,  thick, dashed, -latex] plot [smooth] coordinates {(0,2) (1,3.5)  (2,4)}; 
	\draw[draw=cyan,  thick, dashed, -latex] plot [smooth] coordinates {(0,2) (0.5, 4) (1,5)}; 
	\draw[draw=cyan,  thick, dashed, -latex] (3,6)--(5,9); 
	\draw[draw = cyan,  thick] (3,6) -- (5,6); 
	\draw[draw = cyan,  thick] (2,4) -- (2,5); 
	\draw[draw = cyan,  thick] (1,5) -- (1,6); 
	\node at (3, 6) [circle, fill = cyan, scale=0.4]{}; 
	\node at (5, 6) [circle, fill = cyan, scale=0.4]{}; 
	\node at (2,5) [circle, fill = cyan, scale=0.4]{}; 
	\node at (1,6) [circle, fill = cyan, scale=0.4]{}; 
	\node at (4.5,-1)[rounded rectangle, draw = blue!50!green, fill = white, scale = 0.7]{{$r=2\colon y^u y_{i_1}y_{i_2}$}};
\end{tikzpicture}
&\begin{tikzpicture}[scale = 0.4, trim left=-2cm]
	\draw[help lines] (0,0) grid +(9,9);
	\draw[solid, thick] (0,0) -- +(9,9);
	\draw[draw = red,  thick] (0,0) -- (0,2) -- (1,2)--(1,3) -- (2,3)--(2,4)--(5,4); 
	\draw[draw = red,  thick, densely dashed] (5,4) -- (5,9); 
  	\draw[draw = cyan, opacity=0.4, line width = 3pt] (0,0) -- (0,2); 
	\draw[draw=blue!40,  thick] (0,9) -- (5,4) -- (7.5, 1.5); 
 	\node at (7.5, 1.5) [circle, fill = blue!30, scale=0.5]{$ L_{d}$}; 
	\draw[draw=green!60!black,  thick] (0,7) -- (5.5,1.5); 
 	\node at (5.5,1.5) [circle, fill = green!80!black, scale=0.5]{$ L_{k}$}; 
	\draw[draw=cyan,  thick, dashed, -latex] plot [smooth] coordinates {(3,4)   (4,5)}; 
	\draw[draw=cyan,  thick, dashed, -latex] plot [smooth] coordinates {(2,5) (4,5)}; 
	\draw[draw=cyan,  thick, dashed, -latex] plot [smooth] coordinates {(0,2) (1,3.5)  (2,4)}; 
	\draw[draw=cyan,  thick, dashed, -latex] plot [smooth] coordinates {(0,2) (0.5, 4) (1,5)}; 
	\draw[draw=cyan,  thick, dashed, -latex] (4,5)--(6,8); 
	\draw[draw=cyan,  thick, dashed] (6,5)--(6,8); 
	\draw[draw = cyan!70!black, ultra thick] (4,5) -- (6,5); 
	\draw[draw = cyan,  thick] (2,4) -- (3,4); 
	\draw[draw = cyan,  thick] (1,5) -- (2,5); 
	\node at (4, 5) [circle, fill = cyan, scale=0.4]{}; 
	\node at (6, 5) [circle, fill = cyan, scale=0.4]{}; 
	\node at (2,5) [circle, fill = cyan, scale=0.4]{}; 
	\node at (3,4) [circle, fill = cyan, scale=0.4]{}; 
	\node at (4.5,-1)[rounded rectangle, draw = blue!50!green, fill = white, scale = 0.7]{{$\displaystyle r=2\colon y^u y_k y_{i_1}y_{i_2} 
								\atop \displaystyle \text{Case~\ref{r=2}}$}};
	\draw [->,decorate, thick, decoration={snake,amplitude=.4mm,segment length=2mm,post length=1mm}] (11,5) -- (6,5);
	\node at (11,5)[rounded rectangle, draw = cyan!70!black, fill = white, scale = 0.7]{{MCP $y^u y_k y_{d+1}y_{d+2}$}};
\end{tikzpicture}
\\
\begin{tikzpicture}[scale = 0.4, trim left=2cm]
	\draw[help lines] (0,0) grid +(9,9);
	\draw[solid, thick] (0,0) -- +(9,9);
	\draw[draw = red,  thick] (0,0) -- (0,2) -- (1,2)--(1,3) -- (2,3)--(2,4)--(5,4); 
	\draw[draw = red,  thick, densely dashed] (5,4) -- (5,9); 
  	\draw[draw = cyan, opacity=0.4, line width = 3pt] (0,0) -- (0,2); 
	\draw[draw=blue!40,  thick] (0,9) -- (5,4) -- (7.5, 1.5); 
 	\node at (7.5, 1.5) [circle, fill = blue!30, scale=0.5]{$ L_{d}$}; 
	\draw[draw=green!60!black,  thick] (0,7) -- (5.5,1.5); 
 	\node at (5.5,1.5) [circle, fill = green!80!black, scale=0.5]{$ L_{k}$}; 
	\draw[draw=cyan,  thick, dashed, -latex] plot [smooth] coordinates {(2,5)  (2,7)}; 
	\draw[draw=cyan,  thick, dashed, -latex] plot [smooth] coordinates {(1,6)   (2,7)}; 
	\draw[draw=cyan,  thick, dashed, -latex] plot [smooth] coordinates {(0,7)  (2,7)}; 
	\draw[draw=cyan,  thick, dashed, -latex] plot [smooth] coordinates {(0,2) (1,3.5)  (2,4)}; 
	\draw[draw=cyan,  thick, dashed, -latex] plot [smooth] coordinates {(0,2) (0.5, 4) (1,5)}; 
	\draw[draw=cyan,  thick, dashed, -latex] plot [smooth] coordinates {(0,2) (0, 4.4)  (0,6)}; 
	\draw[draw=cyan,  thick, dashed, -latex] (2,7)--(5,9); 
	\draw[draw = cyan,  thick] (2,7) -- (5,7); 
	\draw[draw = cyan,  thick] (2,4) -- (2,5); 
	\draw[draw = cyan,  thick] (1,5) -- (1,6); 
	\draw[draw = cyan,  thick] (0,6) -- (0,7); 
	\node at (2,7) [circle, fill = cyan, scale=0.4]{}; 
	\node at (5, 7) [circle, fill = cyan, scale=0.4]{}; 
	\node at (2,5) [circle, fill = cyan, scale=0.4]{}; 
	\node at (1,6) [circle, fill = cyan, scale=0.4]{}; 
	\node at (0, 7) [circle, fill = cyan, scale=0.4]{}; 
	\node at (4.5,-1)[rounded rectangle, draw = blue!50!green, fill = white, scale = 0.7]{{$r>2\colon y^u y_{i_1}\cdots y_{i_r}$}};
\end{tikzpicture}
&\begin{tikzpicture}[scale = 0.4, trim left=-2cm ]
	\draw[help lines] (0,0) grid +(9,9);
	\draw[solid, thick] (0,0) -- +(9,9);
	\draw[draw = red,  thick] (0,0) -- (0,2) -- (1,2)--(1,3) -- (2,3)--(2,4)--(5,4); 
	\draw[draw = red,  thick, densely dashed] (5,4) -- (5,9); 
  	\draw[draw = cyan, opacity=0.4, line width = 3pt] (0,0) -- (0,2); 
	\draw[draw=blue!40,  thick] (0,9) -- (5,4) -- (7.5, 1.5); 
 	\node at (7.5, 1.5) [circle, fill = blue!30, scale=0.5]{$ L_{d}$}; 
	\draw[draw=green!60!black,  thick] (0,7) -- (5.5,1.5); 
 	\node at (5.5,1.5) [circle, fill = green!80!black, scale=0.5]{$ L_{k}$}; 
	\draw[draw=cyan,  thick, dashed, -latex] plot [smooth] coordinates {(3,4)  (3,6)}; 
	\draw[draw=cyan,  thick, dashed, -latex] plot [smooth] coordinates {(2,5)  (3,6)}; 
	\draw[draw=cyan,  thick, dashed, -latex] plot [smooth] coordinates {(1,6) (3,6)}; 
	\draw[draw=cyan,  thick, dashed, -latex] plot [smooth] coordinates {(0,2) (1,3.5)  (2,4)}; 
	\draw[draw=cyan,  thick, dashed, -latex] plot [smooth] coordinates {(0,2) (0.5, 4) (1,5)}; 
	\draw[draw=cyan,  thick, dashed, -latex] plot [smooth] coordinates {(0,2) (0, 4.4)  (0,6)}; 
	\draw[draw=cyan,  thick, dashed, -latex] (3,6)--(6,8); 
	\draw[draw=cyan,  thick, dashed] (6,6)--(6,8); 
	\draw[draw = cyan,  thick] (3,6) -- (6,6); 
	\draw[draw = cyan,  thick] (2,4) -- (3,4); 
	\draw[draw = cyan,  thick] (1,5) -- (2,5); 
	\draw[draw = cyan,  thick] (0,6) -- (1,6); 
	\node at (3,6) [circle, fill = cyan, scale=0.4]{}; 
	\node at (6,6) [circle, fill = cyan, scale=0.4]{}; 
	\node at (2,5) [circle, fill = cyan, scale=0.4]{}; 
	\node at (1,6) [circle, fill = cyan, scale=0.4]{}; 
	\node at (3,4) [circle, fill = cyan, scale=0.4]{}; 
	\node at (4.5,-1)[rounded rectangle, draw = blue!50!green, fill = white, scale = 0.7]{{$\displaystyle r>2\colon y^u y_{i_1}\cdots y_{i_r} \atop
										 \displaystyle \text{Case~\ref{rgeq3}}$}};
\end{tikzpicture}
\end{align*}
            \caption{On the left are the path of monomials $y^\beta$ for $\beta\in P_\alpha^{(k)}$. On the right are the path of the terms in Equation~\eqref{spoly_lemma_eqn2} for $r>1$}
            \label{Fig:case2} 
\end{figure}
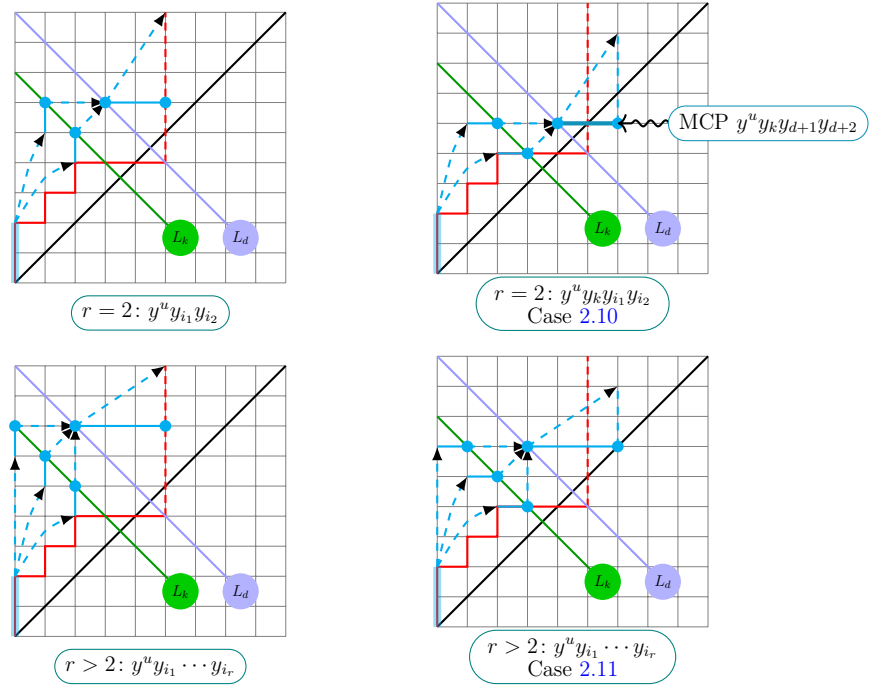 
When we multiply $y^\beta=y^{u} y_{i_1} \cdots y_{i_{r}}$ by $y_k$, the portion of the path corresponding to 
$\beta_{k+1}\beta_{k+2}\cdots\beta_n$ will be translated down by one unit and translated right by one unit.
All the paths remain above the diagonal except when $r=2$ and $y^\beta=y^{u} y_{d+1}y_{d+2}$.
All paths will match a subset of the paths we encounter in \S\ref{beta_prime_redu} as we see below. In particular, no further division
can be performed with polynomials in $F_{\ell_1(\alpha)-1}$.

\begin{case}[{\it $r=2$ in Equation~\eqref{spoly_lemma_eqn2}}\null] \label{r=2}
 As visualized in Figure~\ref{Fig:case2}, when $r=2$ in Equation~\eqref{spoly_lemma_eqn2} the paths we receive correspond exactly to the paths in Case~\ref{r=1} for which $v_k=1$. In this case, these are the new MCPs which are generated. 
\end{case} 

\begin{case}[{\it $r \geq 3$ in Equation~\eqref{spoly_lemma_eqn2}}\null] \label{rgeq3}
 Also Visualized in Figure~\ref{Fig:case2}, when $r>2$ in Equation~\eqref{spoly_lemma_eqn2} this set of paths corresponds exactly to the paths in Case~\ref{rgeq2}, for which $v_k=1$.
\end{case}

\subsection{\label{final_red}Combining all divisions}
The analysis of \S\ref{beta_prime_redu} and \S\ref{beta_redu} shows that in order to divide $S(g_\alpha, y_k^2)$ with respect to $F_{\ell_1(\alpha)-1}$,  we need to substitute Equation~\eqref{eq:thefirstr1} in Equation~\eqref{spoly_lemma_eqn2} for $r=1$. We also substitute $s=r-1$ to match up the terms. We obtain
\begin{align}
	\overline{S(g_\alpha, y_k^2)}^{F_{\ell_1(\alpha)-1}} = &
	  - \sum_{r=2}^{\ell_1(\alpha)+1} \sum_{v \in U_{\alpha,r-1}} ry^v \sum_{d+1 \leq i_1 < \cdots < i_{r} \leq n}y_{i_1} \cdots  y_{i_{r}} \label{eq:sreduction}\\ 
	  &\  + \sum_{r=2}^{\ell_1(\alpha)}\sum_{u \in U_{\alpha,r}\atop u_k=0 \hfill} y^{u}y_k \sum_{d+1 \leq i_1 < \cdots < i_r \leq n} y_{i_1} \cdots y_{i_{r}}  
	   \notag\\
	   = &
	   \sum_{r=2}^{\ell_1(\alpha)+1} \sum_{v \in U_{\alpha,r-1}\atop v_k=0\hfill} (-r)y^v \sum_{d+1 \leq i_1 < \cdots < i_{r} \leq n}y_{i_1} \cdots  y_{i_{r}} \label{eq:sreduction2}\\ 
	  &\  + \sum_{r=2}^{\ell_1(\alpha)}\sum_{v \in U_{\alpha,r-1}\atop v_k=1 \hfill} (-r+1)y^{v} \sum_{d+1 \leq i_1 < \cdots < i_r \leq n} y_{i_1} \cdots y_{i_{r}}.
	   \notag
\end{align}
In the second equality, we combined terms using the fact that for $u \in U_{\alpha,r}$ and $u_k=0$, the term $y^uy_k=y^v$ for $v \in U_{\alpha,r-1}$ and $v_k=1$. When $r=2$, the term $v \in U_{\alpha,r-1}$ and $v_k=1$ is unique and is exactly $v=u(\alpha,k)$  as in Definition~\ref{u_alpha_one_k}.

\subsection{\label{results} The Matrix of the Linear Transformation}

The process until now has focused on a single S-polynomial reduction $\overline{S(g_\alpha, y_k^2)}^{F_{\ell_1(\alpha)-1}} =\mathcal{S}_{\alpha, k} $, for a fixed $k$ and fixed $\alpha$. In this section, we examine all possible $\mathcal{S}_{\alpha, k_{j}}$, letting $k_j$ vary while keeping $\alpha$ fixed. Also, since $\alpha$ has $\ell_1(\alpha)$ entries equal to 1, we have $U_{\alpha, 1}=\big\{u(\alpha,k_i)\big| 1\le i\le \ell_1(\alpha)\big\}$ with  $|U_{\alpha,1}| = \ell_1(\alpha)$. Let
   $$A_\alpha = \ \  \Big[ \text{Coefficient of } y^{u(\alpha, k_i)}y_{d+1}y_{d+2} \text{ in } \mathcal{S}_{\alpha,k_j} \Big]_{1 \leq i,j \leq l+1}$$
When $r=2$ in Equation~\eqref{eq:sreduction2} we readily see that we have $-2$ in the diagonal of $A_\alpha$ and $-1$ off the  diagonal. 
This matrix is invertible, given explicitly by
$$
    A_\alpha  =  
    \begin{bmatrix} -2 & -1 & \cdots  & -1 \\-1 & -2 &  \cdots & -1 \\  \vdots & \vdots & \ddots  & \vdots \\ -1 & -1 & \cdots & -2 \end{bmatrix},
    \qquad
    M_\alpha =A_\alpha^{-1}  = \frac{1}{\ell_1(\alpha)+1} 
    \begin{bmatrix} -\ell_1(\alpha) & 1 & \cdots  & 1 \\1 & -\ell_1(\alpha)& \cdots & 1 \\ \vdots & \vdots  & \ddots & \vdots \\ 1 & 1 & \cdots  & -\ell_1(\alpha)  \end{bmatrix}
$$
Since $A_\alpha$ is invertible, we have that $\{\mathcal{S}_{\alpha, k_i}\}_{1 \leq i \leq \ell_1(\alpha)}$ is linearly independent. Let $V$ be the $\field$-span of $\{\mathcal{S}_{\alpha, k_i}\}_{1 \leq i \leq \ell_1(\alpha)}$ and we define $\blue{\phi}\colon V \to  V$ as 
\begin{equation} \label{lemma_phi_map2}
    \blue{\phi} (\mathcal{S}_{\alpha, k_i}) = \sum_{j=1}^{\ell_1(\alpha)} c_{ij} \mathcal{S}_{\alpha, {k_{j}}},
\end{equation}
where $M_\alpha=[c_{ij}]$. This is the linear transformation we require in  Lemma~\ref{lemma_phi_map}. To conclude the proof, we need to show that  $\blue{\phi}(\mathcal{S}_{\alpha, k_i})  = g_{\alpha^{(k_i)}}$. To compute this, we substitute Equation~\eqref{eq:sreduction2} in~\eqref{lemma_phi_map2} for each $k_j$. When $r=2$ and fixed $d+2 \leq i_1 < i_2 \leq n$, then the coefficient of 
$y^{u(\alpha, k_i)} y_{i_1} y_{i_2}$ in the expansion on $\mathcal{S}_{\alpha, k_i}$, is exactly the matrix $A_\alpha$. Therefore, when we make the linear combination using $M_\alpha$ (the  inverse of $A_\alpha$), we get
\begin{equation}\label{eq:coefr=2}
  \text{Coeff  of } y^{u(\alpha, k_{i'})} y_{i_1} y_{i_2} \text{ in  } \blue{\phi} (\mathcal{S}_{\alpha, k_{i}})  = 
  \begin{cases}
  	1 &\text{if } i=i'\,,\\
	0& \text{otherwise.}
  \end{cases}
  \end{equation}
For the other terms, fix $r, v$ and $d+1 \leq i_1 < \cdots < i_r \leq n$, such that $3 \leq r \leq \ell_1(\alpha)+1$ and  $v \in U_{\alpha,r-1}$.
We want to determine the coefficient of $y^{v} y_{i_1} \cdots y_{i_{r}}$ in $\blue{\phi} (\mathcal{S}_{\alpha, k_{i}}) $. This will depend on the value of $v_{k_i}$.

\begin{case}[{\it $v_{k_i}=0$}\null] \label{vk=0} 
Since $v_{k_i}=0$ and $v \in U_{\alpha,r-1}$, there are $(r-1)-1=r-2$ other entries of $v$ such that $\alpha_k=1$ and $v_k=0$.
On the other hand, the number of positions where $\alpha_k=v_k=1$ is $\ell_1(\alpha)-r+1$.
The coefficient of $y^{v} y_{i_1} \cdots y_{i_{r}}$ in $\blue{\phi} (\mathcal{S}_{\alpha, k_{i}}) $ is
\begin{align*}
 -rc_{ii}  + \sum_{v_{k_j}=0 \atop j\ne i}  -rc_{ij}   + \sum_{v_{k_j}=1 \atop j\ne i}  (-r+1)c_{ij}  
    &= \frac{r\ell_1(\alpha) -r(r-2) + (-r+1)(\ell_1(\alpha)-r+1)}{\ell_1(\alpha)+1} \\
    &= 1\,.
 \end{align*}
 The coefficient $-r$ and $-r+1$ are from Equation~\eqref{eq:sreduction2}, and they depend on the value of $v_k$ and the entry $[c_{ij}]=M_\alpha$.
\end{case} 

\begin{case}[{\it $v_{k_i}=1$}\null] \label{vk=1} 
Now, $\alpha_{k_i}=v_{k_i}=1$. There are $r-1$ entries of $v$ such that $\alpha_k=1$ and $v_k=0$
and $\ell_1(\alpha)-r$ entries where $\alpha_k=v_k=1$, other than the entries where $k=k_i$.
The coefficient of $y^{v} y_{i_1} \cdots y_{i_{r}}$ in $\blue{\phi} (\mathcal{S}_{\alpha, k_{i}}) $ is now
\begin{align*}
 (-r+1)c_{ii}  &+ \sum_{v_{k_j}=0 \atop j\ne i}  -rc_{ij}   + \sum_{v_{k_j}=1 \atop j\ne i}  (-r+1)c_{ij}  \\
    &= \frac{(r-1)\ell_1(\alpha) -r(r-1) + (-r+1)(\ell_1(\alpha)-r)}{\ell_1(\alpha)+1} 
    = 0\,.
 \end{align*}
\end{case}
 
It follows that
\begin{equation}  \label{phi_out1}
    \blue{\phi}(\mathcal{S}_{\alpha, k_i}) =   y^{u(\alpha, k_i)} \sum_{d+1 \leq i_1 < i_2 \leq n}y_{i_1} y_{i_2} +  \sum_{r=3}^{\ell_1(\alpha)+1} \sum_{v \in U_{\alpha,r-1}\atop v_{k_i}=0\hfill} y^v \sum_{d+1 \leq i_1 < \cdots < i_{r} \leq n}y_{i_1} \cdots  y_{i_{r}}.  
\end{equation}
This sum is non-zero since $\ell_1(\alpha)-1\le\ell \le \Floor{\frac{n-1}{2}}-1$, therefore $d+2\le n$.
We want to compare this with $g_{\alpha^{(k_i)}}$, where $\alpha^{(k_i)}=u(\alpha, k_i)\texttt{110}^{n-d-2}$ for $d=2\ell_1(\alpha)-1$.
Using Equation~\eqref{y_beta_sum},
\begin{equation} \label{gin}
	g_{\alpha^{(k_i)}} = y^{{\alpha^{(k_i)}}}+  \sum_{s=1}^{\ell_1(\alpha^{(k_i)})} \sum_{u \in U_{\alpha^{(k_i)},s}} y^u
										\sum_{d+3 \leq i_1 < \cdots < i_s \leq n} y_{i_1} \cdots y_{i_{s}}. 
\end{equation}
Note that $\ell_1(\alpha^{(k_i)})=\ell_1(\alpha)+1$ and for any $u \in U_{\alpha^{(k_i)},s}$ we must have $u_{k_i}=0$, since $\alpha^{(k_i)}_{k_i}=0$. For any other position $1\le k\le d$, we have $\alpha_k=\alpha_k^{(k_i)}$. Hence, $\alpha_k=0 \implies u_k=0 \implies v_k=0$, for any  $v\in U_{\alpha,r-1}$. For $k=d+1$ or $k=d+2$, there are no constraints on $u$ as $\alpha_{d+1}^{(k_i)}=\alpha_{d+2}^{(k_i)}=1$. Moreover, the term $ y^{{\alpha^{(k_i)}}}=y^{u(\alpha, k_i)}y_{d+1}y_{d+2}$ appears in the first sum of Equation~\eqref{phi_out1}.
This shows that every term in Equation~\eqref{gin} appears in Equation~\eqref{phi_out1}.

For the converse, note that any term $y^{u(\alpha, k_i)} y_{i_1} y_{i_2}$  for $d+1 \leq i_1 < i_2 \leq  n$, corresponds to 
$y^{u(\alpha, k_i)} y_{d+1} y_{d+2}=y^{\alpha^{(k_1)}}$;
or $y^{u(\alpha, k_i)}y_{d+1}y_{i}$ and $y^{u(\alpha, k_i)}y_{d+2}y_{i}$ for $u(\alpha, k_i)\texttt{10}, u(\alpha, k_i)\texttt{01}\in U_{\alpha^{(k_i)},1}$ and
$d+3\le i\le n$;
or $y^{u(\alpha, k_i)}y_{i_1}y_{i_2}$ for $u(\alpha, k_i)\texttt{00}\in U_{\alpha^{(k_i)},2}$ and $d+3\le i_1<i_2\le n$.
Similarly, for $v\in U_{\alpha,r-1}$ with $v_{k_i}=0$, the terms $y^v y_{i_1} \cdots  y_{i_{r}} $ for $d+1 \leq i_1 < \cdots < i_{r} \leq n$ correspond to  
$y^vy_{d+1} y_{d+2} y_{j_{3}}\cdots  y_{j_{r-1}} $, for $v\texttt{11}\in U_{\alpha^{(k_i)},r}$ and $d+3 \leq j_3 < \cdots <j_{r} \leq n$; 
or 
$y^vy_{d+1} y_{j_{2}}\cdots  y_{j_{r-1}} $ and $y^vy_{d+2} y_{j_{2}}\cdots  y_{j_{r-1}} $ for $v\texttt{01},v\texttt{10} \in U_{\alpha^{(k_i)},r+1}$ and $d+3 \leq j_2< \cdots <j_{r} \leq n$;
or
$y^v y_{j_{1}}\cdots  y_{j_{r-1}} $ for $v\texttt{00} \in U_{\alpha^{(k_i)},r+2}$ and $d+3 \leq j_1< \cdots <j_{r} \leq n$.
This shows the reverse inclusion and concludes our proof of 
 Lemma~\ref{lemma_phi_map}. \hfill$\square$

\subsection{\label{property_c}Termination of Buchberger's Algorithm}
In the previous subsections, we computed the set of remainders  of S-polynomials $\overline{S(g_\alpha, k_1)}^{F_{\ell_1(\alpha)-1}}, \dots, \overline{S(g_\alpha, k_{\ell_1(\alpha)})}^{F_{\ell_1(\alpha)-1}}$ and used Lemma~\ref{lemma_phi_map} to conclude that, $\{g_{\alpha^{(k_i)}}\}_{1\le i\le \ell_1(\alpha)}$ is obtained as an intermediate step in the Buchberger's Algorithm. If $\ell_1(\alpha)-1<\ell$, then these polynomials are already in $F_\ell$ so the division with respect to $F_\ell$ will be zero (no new polynomials). For $\ell_1(\alpha)-1=\ell$, we will obtain a new polynomial and construct $F_{\ell+1}$ in this way, as long as $\ell_1(\alpha)-1 \le \Floor{\frac{n-1}{2}}-1$.
We need to show that for $\ell = \lfloor \frac{n-1}{2} \rfloor$, the set $F_\ell$ is the reduced Gr\"obner basis.
The next lemma shows that the set
\begin{equation} \label{Eq:basis1}
	B=\big\{ y^\gamma\big| \text{\it square free and $\gamma$ stays above the diagonal}\big\}\,,
\end{equation}
$\field$-spans the quotient $R/I$ using division by $F_\ell$ only. In the next section, we will show that $\dim(R/I)$ is at least the cardinality of $B$.
From this we will conclude that $B$ is a basis, and therefore $F_\ell$ is the full reduced Gr\"obner basis of $I$. This will complete the proof of Theorem~\ref{main_theorem2}.

\begin{lemma}
	    \label{lem:Bspan}
	    Let $\ell= \lfloor \frac{n-1}{2} \rfloor$. For any monomial $y^\delta$, the remainder $\overline{y^\delta}^{F_\ell}$ is a $\field$-linear combination of $B$. In particular, $B$ spans $R/I$.
    \end{lemma}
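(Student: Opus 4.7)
The plan is to show that the monomials in $B$ are exactly the normal forms under $F_\ell$-division; since $>_{lex}$ is a monomial order, the division algorithm terminates, and so every $y^\delta$ reduces to a $\field$-linear combination of elements of $B$.

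First I would record the leading monomials of $F_\ell$: by Remark~\ref{rem:lex} and Definition~\ref{F_ell}, these are exactly $\{y_j^2 : 2 \leq j \leq n\}$ together with $\{y^\alpha : \alpha \text{ an MCP of length } n\}$. The hypothesis $\ell = \lfloor(n-1)/2\rfloor$ is used here: for any length-$n$ MCP $\alpha$, the terminal-one position $d = 2\ell_1(\alpha) - 1 \leq n$ gives $\ell_1(\alpha)-1 \leq \lfloor(n-1)/2\rfloor = \ell$, so $g_\alpha \in F_\ell$. Thus every MCP of length $n$ contributes.

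Next I would characterize the irreducible monomials, meaning those $y^\delta$ not divisible by any of the leading terms above. The $y_j^2$ forbid $\delta_j \geq 2$ for $j \geq 2$, and since $\alpha = 1\texttt{0}^{n-1}$ is itself an MCP with leading term $y_1$, irreducibility also forces $\delta_1 = 0$. Hence every irreducible monomial is square-free. I would then establish the key equivalence: a square-free $y^\delta$ with $\delta_1 = 0$ is irreducible if and only if $\delta \in B$, i.e.\ the associated northeast path stays weakly above the diagonal. For one direction, assume the path of $\delta$ dips below the diagonal and let $k$ be the first step after which this occurs; then $\delta_k = 1$, and $\delta_1 \cdots \delta_{k-1}$ has equally many $\texttt{0}$s and $\texttt{1}$s, so $k-1 = 2l$ and $w := \delta_1\cdots\delta_{k-1}$ is a Catalan path. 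The MCP $\alpha = w\,\texttt{1}\,(\texttt{0})^{n-k}$ then satisfies $\alpha \leq \delta$ componentwise, so $y^\alpha$ divides $y^\delta$ and $y^\delta$ is reducible by $g_\alpha \in F_\ell$. Conversely, if some MCP $\alpha$ satisfies $\alpha \leq \delta$, then at position $d = 2\ell_1(\alpha)-1$ the path of $\alpha$ sits at $(\ell_1(\alpha), \ell_1(\alpha)-1)$, strictly below the diagonal; because $\delta$ has at least as many east steps as $\alpha$ in every prefix, $\delta$ is also strictly below at position $d$, so $\delta \notin B$.

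Combining the two steps, the irreducible monomials under $F_\ell$-reduction are exactly the elements of $B$. Since $>_{lex}$ is a well-ordering on monomials, the division algorithm applied to any $y^\delta$ terminates after finitely many steps, and the remainder $\overline{y^\delta}^{F_\ell}$ is a $\field$-linear combination of such irreducible monomials, that is, of elements of $B$. Passing to $R/I$ yields the spanning statement.

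The main obstacle I expect is the reverse direction of the key equivalence, i.e.\ constructing an MCP $\alpha \leq \delta$ from a path $\delta$ that crosses the diagonal; but the choice of $k$ as the first crossing step and the prefix $\delta_1\cdots\delta_{k-1}$ as the Catalan path $w$ makes this transparent once set up carefully. Everything else is either bookkeeping of leading terms or a standard appeal to termination of the division algorithm.
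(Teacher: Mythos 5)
Your proposal is correct and takes essentially the same route as the paper's proof: both hinge on the observation that any monomial crossing the diagonal is divisible by the leading term $y^\alpha$ of some $g_\alpha$ with $\alpha$ an MCP (obtained by truncating the path at its first step below the diagonal and padding with zeros), and both then invoke termination of the division algorithm under $>_{lex}$. You organize the argument as an explicit characterization of the normal (irreducible) monomials, proving both implications, whereas the paper carries out the reduction step directly and iterates; the extra converse direction you prove (no MCP leading term divides a square-free $\delta$ that stays above the diagonal) is not strictly needed for the spanning statement but is a correct and welcome strengthening.
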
 

\begin{proof}
If $y^\delta$ is not square free, then it is divisible by $y_k^2$ for some $k$ and we get $\overline{y^\delta}^{F_\ell}=0$. 
If $y^\delta$ crosses the diagonal, 
let $k$ be the smallest integer such that, $\alpha=\delta_1\delta_2\cdots\delta_k \texttt{0}^{n-k}$ 
crosses under the diagonal. It is clear that $\alpha$ is a MCP and $\ell_1(\alpha)-1\le \ell$, hence $g_\alpha\in F_\ell$. Since we have $y^\delta=y^\alpha y^\epsilon$, we obtain
\begin{equation}\label{eq:contra}
	\overline{y^\delta}^{F_\ell}=\overline{y^\alpha y^\epsilon}^{F_\ell}=\overline{-\sum_{\beta\in P_\alpha}y^\beta y^\epsilon}^{F_\ell}.
\end{equation}
All terms $y^\beta y^\epsilon<_{lex} y^\alpha y^\epsilon$. We repeat the process on any monomials in the sum that cross the diagonal. 
We know this process terminates (division algorithm) and we are left only with a linear combination of monomials in $B$.
\end{proof}

\section{Orthogonal complement, dimension, and irreducible decomposition}\label{sec:irreducible}
In this section, we quickly introduce an inverse system to compute the orthogonal complement $H=I^\perp$ of $I$.
We have that $H\cong R/I$.
We then construct a set $B'$ of independent elements inside  $H$ that have the same cardinality as $B$ in Equation~\eqref{Eq:basis1}.
This will show that both $B$ and $B'$ are a basis of the  quotient, concluding the proof of Theorem~\ref{main_theorem2}.
 We then remark that  since $I$ is homogeneous and remains invariant under the permutation of variables, we can define an action of the symmetric group on the (graded) quotient $R/I$. Using the  irreducible representation theory of the symmetric group (see \cite{Sagan} for example), we show that $B'$  exhibits a unique irreducible representation for each homogeneous component of $R/I$. This will complete our full understanding of this quotient.

\subsection{Inverse System and Orthogonal complement of $I$}\label{sec:inverse}
At the root of commutative algebra, one finds the theory of inverse systems developed by Macaulay~\cite{Macaulay}. This is classical theory, and we
only review (without proof) the needed ingredients.
Naively, we aim to study the quotient $R/I$ via the orthogonal complement of $I$ under the following scalar product.
For $P,Q\in R$, we define
 \begin{equation}\label{eq:scalar}
     \la P, Q \ra = \Big( P\big({\textstyle \frac{ \partial}{ \partial y_1},\frac{ \partial}{ \partial y_2},\ldots,\frac{ \partial}{ \partial y_n}}\big) Q\Big)(0,0,\ldots,0).
   \end{equation}
That is, we substitute the partial derivatives in $P$ and apply this operator to $Q$, after we evaluate the resulting polynomial at $(0,0,\ldots,0)$.
This is a scalar product on $R$. We then compute the orthogonal complement $H=I^\perp$ in $R$ with respect to the scalar product in Equation~\eqref{eq:scalar}.
A wonderful lemma (a consequence of Taylor expansion) gives us the following.
\begin{lemma}\label{lem:ortho}
  $$H = \big\{ Q \in R\big| P\big({\textstyle \frac{ \partial}{ \partial y_1},\ldots,\frac{ \partial}{ \partial y_n}}\big) Q =0, \text{ $P$ generators of I}\big\}$$
  \end{lemma}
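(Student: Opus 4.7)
My plan is to view the bilinear form in Equation~\eqref{eq:scalar} as a non-degenerate pairing on $R$ and then exploit the ``adjoint'' identity that lets us move a generator of $I$ across the pairing as a differential operator. The whole proof then reduces to a one-line equivalence.

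\textbf{Step 1: Non-degeneracy on monomials.} I would first record that on the monomial basis the pairing is diagonal. For multi-indices $\alpha, \beta \in \mathbb{Z}_{\geq 0}^n$, a direct computation gives
\begin{equation*}
    \langle y^\alpha, y^\beta\rangle = \bigl(\partial^\alpha y^\beta\bigr)(0) = \alpha!\,\delta_{\alpha,\beta},
\end{equation*}
because $\partial^\alpha y^\beta$ has positive degree unless $\alpha=\beta$, in which case it equals the constant $\alpha!$. Since $\mathrm{char}(\field)=0$, the scalars $\alpha!$ are invertible, so distinct monomials are orthogonal and the pairing is non-degenerate on each homogeneous component (and hence on $R$, since the pairing respects the grading in the sense that $\langle y^\alpha,y^\beta\rangle=0$ unless $|\alpha|=|\beta|$).

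\textbf{Step 2: The adjoint identity.} The key algebraic fact is that substituting partials into a product factors through composition of operators: since partial derivatives commute, $(PR)(\partial)= P(\partial)\circ R(\partial)$ as operators on $R$. Therefore
\begin{equation*}
    \langle PR, Q\rangle = \bigl((PR)(\partial)Q\bigr)(0) = \bigl(R(\partial)\,P(\partial)Q\bigr)(0) = \langle R, P(\partial)Q\rangle
\end{equation*}
for any $P,Q,R\in R$. In words, multiplication by $P$ on one side of the pairing is dual to applying the differential operator $P(\partial)$ on the other side.

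\textbf{Step 3: Conclude.} Write $I=\langle P_1,\dots,P_m\rangle$ with $P_1=y_1+\cdots+y_n$ and $P_{i+1}=y_i^2$ for $1\le i\le n$. A polynomial $Q$ lies in $H=I^\perp$ iff $\langle R P_i, Q\rangle=0$ for every $R\in R$ and every generator $P_i$. By the adjoint identity this is equivalent to $\langle R, P_i(\partial) Q\rangle=0$ for all $R$, which by Step~1 (non-degeneracy) is equivalent to $P_i(\partial)Q=0$. Applying this to each generator yields exactly the description of $H$ in the lemma.

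The argument has no real obstacle: the only thing to be careful about is the grading/finiteness issue in invoking non-degeneracy, but this is handled degree by degree since $P_i(\partial)Q$ is a polynomial and vanishing against all monomials forces it to be zero. The essential content is the commutativity of partial derivatives, which allows $(PR)(\partial)=P(\partial)R(\partial)$ and produces the adjoint relation.
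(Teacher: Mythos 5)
Your proof is correct and complete. Note that the paper does not actually supply a proof of this lemma: in \S\ref{sec:inverse} the authors explicitly say they ``only review (without proof) the needed ingredients'' and describe the statement as ``a consequence of Taylor expansion,'' deferring to Macaulay's classical theory of inverse systems. Your three-step argument is the standard way to fill this in, and it is almost certainly what the paper's parenthetical alludes to: Step~1 (non-degeneracy on the monomial basis, using $\operatorname{char}\field=0$ to invert the $\alpha!$) is exactly the observation that the Taylor coefficients of a polynomial determine it; Step~2 is the adjoint identity $\langle PR,Q\rangle=\langle R,P(\partial)Q\rangle$, which follows because $P\mapsto P(\partial)$ is a ring homomorphism into the commutative algebra of constant-coefficient differential operators; and Step~3 combines these with the fact that $I$ is the $R$-module generated by the $P_i$, reducing $\langle RP_i,Q\rangle=0$ (for all $R$) to $P_i(\partial)Q=0$. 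Each step is sound, the reduction of an arbitrary $f\in I$ to terms of the form $RP_i$ is handled correctly by bilinearity, and the finiteness caveat you raise is indeed harmless since $P_i(\partial)Q$ is a single polynomial whose Taylor coefficients all vanish.
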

 The difference is that now $H$ is the solution set of a system of differential equations, we do not need the evaluation at zero. Since $H$ is the orthogonal complement
 of $I$, we automatically get that
 $$ H \cong R/I\,.$$
 
\subsection{Standard Young Tableaux }\label{ss:standardYoung}
To define elements in $H$, we need to introduce the notion of Standard Young Tableaux. This will also be useful  for the
representation theory of the symmetric group aspect of our investigation. We shall also see that the set $B$ in Equation~\eqref{Eq:basis1} is related to some standard Young tableaux.  
\begin{definition} Given a sequence of integers $\lambda=(\lambda_1,\lambda_2,\ldots,\lambda_r)$  such that $\lambda_1\ge \lambda_2\ge\cdots\ge\lambda_r >0$ and $n=\lambda_1+\cdots+\lambda_r$, we construct the set 
  $$D_\lambda=\big\{ (i,j)\in \Z\times\Z \big|  1\le j \le r;\ 1\le i \le \lambda_j \big\}\,.$$
We say that $\lambda$ is an {\bf partition} of $n$ and $D_\lambda$ is the {\bf diagram} of $\lambda$. A bijection
  $$T\colon \{1,2,\ldots,n\}\to D_\lambda\,,$$
  is called a {\bf Standard Young Tableaux} (abbreviated SYT) if 
  \begin{align*}
  T(i,j) < T(i+1,j) \qquad \text{ and }\qquad T(i,j) < T(i+1,j),
\end{align*}
whenever $T(i,j)$, $T(i+1,j)$ and/or $T(i,j) < T(i+1,j)$ are defined. We usually put the value $T(i,j)$ in the position $(i,j)$ forming
a {\sl tableau} in the plane. The entries  increase in rows and columns. We say that $\lambda$ is the {\bf shape} of $T$ and $n$ is the {\bf size} of $T$. We write $T\in SYT_\lambda$ in this case.
\end{definition}
\begin{example}\label{ex:tab}
	Let $\lambda=(5,3)$, the picture below is a visualization of a single $T\in SYT_\lambda$.
$$T=
\begin{tikzpicture}[scale = 0.4,baseline=5pt ]
	\draw[solid, thick] (0,0) -- (5,0);
	\draw[solid, thick] (0,1) -- (5,1);
	\draw[solid, thick] (0,2) -- (3,2);
	\draw[solid, thick] (0,0) -- (0,2);
	\draw[solid, thick] (1,0) -- (1,2);
	\draw[solid, thick] (2,0) -- (2,2);
	\draw[solid, thick] (3,0) -- (3,2);
	\draw[solid, thick] (4,0) -- (4,1);
	\draw[solid, thick] (5,0) -- (5,1);
	\node at (.5,.5) [scale=0.7] {$1$}; 
	\node at (1.5,.5) [scale=0.7] {$2$}; 
	\node at (.5,1.5) [scale=0.7] {$3$}; 
	\node at (2.5,.5) [scale=0.7] {$4$}; 
	\node at (3.5,.5) [scale=0.7] {$5$}; 
	\node at (1.5,1.5) [scale=0.7] {$6$}; 
	\node at (4.5,.5) [scale=0.7] {$7$}; 
	\node at (2.5,1.5) [scale=0.7] {$8$}; 
\end{tikzpicture}
$$
\end{example}
\begin{proposition}\label{prop:bijection}
For $k\le \lfloor \frac{n}{2} \rfloor$, let $\lambda=(n-k,k)$.  The number $f^\lambda=\big|SYT_\lambda\big|$ is equal to the number of square free monomials $y^\gamma$ that stay above the diagonal and $\ell_1(\gamma)=k$.
\end{proposition}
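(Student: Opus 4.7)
The plan is to exhibit an explicit bijection $\Phi\colon SYT_\lambda \to B_k$, where $\lambda=(n-k,k)$ and $B_k\subset B$ denotes the subset of Equation~\eqref{Eq:basis1} consisting of square-free $y^\gamma$ with $\ell_1(\gamma)=k$ whose path stays weakly above the diagonal. Given $T\in SYT_\lambda$, I will set $\gamma_i=0$ when $i$ lies in the first row of $T$ and $\gamma_i=1$ when $i$ lies in the second row; since the second row has exactly $k$ cells, this produces $\gamma\in\{0,1\}^n$ with $\ell_1(\gamma)=k$ automatically. The candidate inverse reads $\gamma$ from left to right and, upon encountering position $i$, appends $i$ to the next empty cell of the row indicated by $\gamma_i$. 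These two maps are mutually inverse directly from the definitions, so the only substance lies in checking that $\Phi$ lands in $B_k$ and that the inverse produces a genuine SYT.

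First I would verify that $\Phi(T)$ satisfies the above-diagonal condition. Fix a prefix length $j$, and let $b$ be the number of $1$'s among $\gamma_1,\ldots,\gamma_j$; then the row-$2$ entries $T(1,2)<T(2,2)<\cdots<T(b,2)$ are all $\le j$, and the column strictness $T(i,1)<T(i,2)$ for $i=1,\ldots,b$ forces $T(1,1),\ldots,T(b,1)$ to lie in $\{1,\ldots,j-1\}$. Hence at least $b$ of the first $j$ entries of $\gamma$ are $0$, which is precisely the ballot inequality $\ell_0(\gamma_1\cdots\gamma_j)\ge \ell_1(\gamma_1\cdots\gamma_j)$.

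For the converse, starting from $y^\gamma\in B_k$ and applying the reading procedure, the rows of the resulting filling are increasing by construction, so only column strictness requires checking. Let $p_c$ and $q_c$ denote the positions of the $c$-th $1$ and $c$-th $0$ in $\gamma$ respectively. Applying the ballot inequality at step $p_c$ yields $p_c-c\ge c$, so positions $1,\ldots,p_c-1$ contain at least $c$ zeros; thus $q_c\le p_c-1<p_c$, which is exactly $T(c,1)<T(c,2)$.

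The only substantive step is the prefix-counting equivalence between column strictness and the ballot inequality, which amounts to the classical cycle-lemma / ballot-problem bijection; no serious obstacle is anticipated. Everything else is straightforward bookkeeping, and the bijection immediately gives $f^\lambda=|B_k|$.
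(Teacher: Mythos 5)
Your bijection is exactly the one the paper cites as classical (first row $\mapsto$ 0, second row $\mapsto$ 1), and your prefix-counting verification of the ballot inequality in both directions is precisely the ``one then checks'' step the paper leaves to the reader. The argument is correct and matches the paper's approach, just with the details written out.
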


\begin{proof}
This is a very classical result and some version of it can be found in~\cite{Stanley}. The bijection $T\mapsto y^\gamma$ for $T\in SYT_\lambda$ is  given by $\gamma_i=0$, if $i$ lies in the first row of $T$, otherwise $\gamma_i=1$. One then checks that staying above the diagonal is equivalent to the inequalities defining standard  Young tableaux.
\end{proof}

In the proof above, let us denote by  $y^T$ the monomial we obtain from $T$. We thus have that $B$ in  Equation~\eqref{Eq:basis1} is also given by the following disjoint union
\begin{equation}\label{Eq:BinT}
	B=\biguplus_{k=0}^{ \lfloor \frac{n}{2} \rfloor} \big\{ y^T \big| T \in SYT_{(n-k,k)}\big\}\,.
\end{equation}

\subsection{Independent elements in $H$}\label{ss:specht}
We introduce some special Specht polynomials~\cite{Specht}. This will give us linearly independent polynomials in $H$.
Fix $k\le \lfloor \frac{n}{2} \rfloor$ and let $\lambda=(n-k,k)$. Given $T\in SYT_\lambda$ we let
\begin{equation}\label{eq:Gpoly}
 G_T(y_1,\ldots y_n) = (y_{T(1,1)} - y_{T(2,1)})  (y_{T(1,2)} - y_{T(2,2)}) \cdots  (y_{T(1,k)} - y_{T(2,k)}) \,.
 \end{equation}
  Here, we only define $G_T$ for shape $\lambda$ with two parts, but it can be done for any shape using Vandermonde polynomials for each column of $T$. 
  These are  the polynomials originally  defined by Specht~\cite{Specht} to construct irreducible representations of $S_n$.
\begin{example}
  Let $T$ be as in Example~\ref{ex:tab}, we have $G_T=(y_1-y_3)(y_2-y_6)(y_4-y_8)$.
\end{example}
Let
\begin{equation}\label{Eq:B2inT}
	B'=\biguplus_{k=0}^{ \lfloor \frac{n}{2} \rfloor} \big\{ G_T \big| T \in SYT_{(n-k,k)}\big\}\,.
\end{equation}

\begin{theorem}\label{thm:indB}
 We have that $B'\subseteq H=I^\perp$ is linearly independent.
\end{theorem}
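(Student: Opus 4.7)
The statement has two parts: the inclusion $B'\subseteq H$ and the linear independence of $B'$. I will handle each separately, with the second being the only part that requires a nontrivial idea.

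For the inclusion, the plan is to invoke Lemma~\ref{lem:ortho} and verify that each $G_T$ is annihilated by the differential operators $D=\partial_{y_1}+\cdots+\partial_{y_n}$ and $\partial_{y_i}^2$ associated with the generators of $I$. Because $T$ is a bijection onto $D_\lambda$, the $2k$ indices $\{T(1,j),T(2,j):1\le j\le k\}$ appearing across the factors of $G_T$ are pairwise distinct, so each variable $y_i$ appears to degree at most one in $G_T$; hence $\partial_{y_i}^2G_T=0$ automatically. The operator $D$ is a derivation, and on each linear factor $D(y_{T(1,j)}-y_{T(2,j)})=1-1=0$, so by Leibniz $DG_T=0$. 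This shows $G_T\in H$.

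For linear independence, observe first that $G_T$ is homogeneous of degree $k$ whenever $T\in SYT_{(n-k,k)}$, and the scalar product of Equation~\eqref{eq:scalar} respects the grading; so polynomials arising from different values of $k$ sit in distinct homogeneous components of $R$ and are automatically independent across $k$. It therefore suffices to fix $\lambda=(n-k,k)$ and prove independence of $\{G_T:T\in SYT_\lambda\}$. The idea is a leading-term argument: work with the monomial order $\succ$ obtained from lex by reversing the variables, so that $y_n\succ y_{n-1}\succ\cdots\succ y_1$. The column condition for a standard tableau gives $T(1,j)<T(2,j)$ for every $j\le k$, hence $y_{T(2,j)}\succ y_{T(1,j)}$, and the $\succ$-leading monomial of $G_T=\prod_{j=1}^k(y_{T(1,j)}-y_{T(2,j)})$ is
\[
\mathrm{LM}_\succ(G_T)=\prod_{j=1}^k y_{T(2,j)}=y^T,
\]
the monomial associated to $T$ in Proposition~\ref{prop:bijection}. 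Since a tableau of shape $(n-k,k)$ is determined by the set of entries in its second row, the map $T\mapsto y^T$ is injective on $SYT_\lambda$, so distinct tableaux yield distinct leading monomials; standard leading-term reasoning then gives linear independence of $\{G_T:T\in SYT_\lambda\}$.

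The only place where one has to be a bit careful is in selecting the variable-reversed lex order so that the leading term coincides with the bijection $T\mapsto y^T$ furnished by Proposition~\ref{prop:bijection}; with that choice in hand, everything reduces to the Leibniz rule and the injectivity of $T\mapsto y^T$. I do not foresee any serious obstacle beyond this bookkeeping.
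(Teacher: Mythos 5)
Your proposal is correct and takes essentially the same route as the paper: the inclusion is checked generator by generator via the Leibniz/derivation observation, and independence comes from a triangularity argument identifying a distinguished monomial $y^T = \prod_j y_{T(2,j)}$ of $G_T$, injective in $T$. The only cosmetic difference is that you reverse the lex order so $y^T$ becomes the leading monomial, whereas the paper keeps $>_{lex}$ as in Definition~\ref{lex} and reads off $y^T$ as the trailing (lex-smallest) term; the two formulations are interchangeable.
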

\begin{proof}
 For the inclusion, using Lemma~\ref{lem:ortho}, we need to show that for $G_T\in B'$ we have
   $$P\big({\textstyle \frac{ \partial}{ \partial y_1},\ldots,\frac{ \partial}{ \partial y_n}}\big) G_T =0, $$
   for all generators $P$ of $I$. If $P=y_k^2$, then clearly this results in zero, since $G_T$ is linear in any single variable (square free).
   For $P=y_1+y_2+\cdots+y_n$, we use the product rule and reduce to the case
   $$\big({\textstyle \frac{ \partial}{ \partial y_1}+ \frac{ \partial}{ \partial y_2}+\cdots+\frac{ \partial}{ \partial y_n}}\big) (y_i-y_j)=0,$$
   for all $i,j$.
  
  To see that the  polynomials are independent, expand each polynomial $G_T$ and look for its trailing term in lex-order, we obtain $LT(G_T)=y_{T(2,1)}   y_{T(2,2)} \cdots  y_{T(2,k)}  = y^T$,
this is clear by always taking the largest variable in each term $(y_i-y_j)$. Since all trailing terms are distinct, we have  linear independence by  triangularity.
\end{proof}

\subsection{Proof of theorem}\label{ss:proofthm}
We have established that
 $$|B|=|B'|\le \dim(H) =\dim(R/I)\le |B|.$$
 The first equality is, by construction, obtained from Theorem~\ref{thm:indB}. The next equality is the isomorphism of $H\cong R/I$. The last inequality is 
 Lemma~\ref{lem:Bspan}.  We must have the equality that both $B$ and $B'$ are the basis of $R/I$, everywhere. To see that Theorem~\ref{main_theorem2}
 is true, assume we can compute new S-polynomials. 
 This would produce a new leading term and reduce the size of $B$ as a spanning set for $R/I$, which is absorbed.
 Hence,  Theorem~\ref{main_theorem2} holds true.

\subsection{Hilbert  Series of $R/I$}\label{ss:hilbert}
Since the ideal $I$ is homogeneous, there is a well-defined notion of degree on $N=R/I$. Hence $R/I=\bigoplus_{k\ge 0} N^{(k)}$, 
where $N^{(k)}$ is the homogeneous component of degree $k$ in $N$. We know that the maximal degree in $N$ is $k= \lfloor \frac{n}{2} \rfloor$.
Considering the disjoint decomposition of the basis $B$ in  Equation~\eqref{Eq:BinT}, we immediately obtain
\begin{theorem}\label{thm:hlbert}
 The Hilbert series of $N=R/I$ is given by
 $$ H_N(q) = \sum_{k\ge 0} \dim\big( N^{(k)}\big) q^k = \sum_{k=0}^{ \lfloor \frac{n}{2} \rfloor} f^{(n-k,k)} q^k\,.
 $$
\end{theorem}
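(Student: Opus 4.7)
The plan is to read off the Hilbert series directly from the explicit basis $B$ of $N=R/I$ established in \S\ref{ss:proofthm}, after observing that this basis is graded. The key facts I would use are: (1) the ideal $I$ is homogeneous, so $N$ inherits a grading $N=\bigoplus_{k\ge 0}N^{(k)}$; (2) the basis $B$ of Equation~\eqref{Eq:basis1} consists of square-free monomials $y^\gamma$, each of which is homogeneous of degree $\ell_1(\gamma)$; and (3) Proposition~\ref{prop:bijection}, which counts the square-free monomials that stay above the diagonal with a prescribed number of ones.

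First I would verify that $B$ is a graded basis. Since each $y^\gamma\in B$ is a monomial, it is homogeneous of degree $\ell_1(\gamma)$, and the decomposition of Equation~\eqref{Eq:BinT},
\begin{equation*}
	B=\biguplus_{k=0}^{\lfloor n/2\rfloor} \{ y^T : T\in SYT_{(n-k,k)}\},
\end{equation*}
already groups basis elements by degree, since under the bijection of Proposition~\ref{prop:bijection} an element of $SYT_{(n-k,k)}$ corresponds to a monomial with exactly $k$ ones, hence of degree $k$. Thus the set $B_k := \{y^T : T\in SYT_{(n-k,k)}\}$ is a basis for $N^{(k)}$.

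Next I would invoke Proposition~\ref{prop:bijection} to conclude $\dim N^{(k)}=|B_k|=f^{(n-k,k)}$. I would also note that the sum truncates at $k=\lfloor n/2\rfloor$: a path above the diagonal of length $n$ must have at least as many zeros as ones at every prefix, so the total number of ones is at most $\lfloor n/2\rfloor$; for $k>\lfloor n/2\rfloor$ the set $SYT_{(n-k,k)}$ is empty (the shape is not a partition) and $N^{(k)}=0$. Summing $\dim(N^{(k)})\,q^k$ over $k$ then gives exactly
\begin{equation*}
  H_N(q)=\sum_{k=0}^{\lfloor n/2\rfloor} f^{(n-k,k)}\,q^k.
\end{equation*}

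There is no real obstacle here: the theorem is essentially a bookkeeping corollary of the two main preceding results (the basis theorem of \S\ref{ss:proofthm} and the tableau/path bijection of Proposition~\ref{prop:bijection}). The only subtle point worth stating explicitly in the proof is that the homogeneity of the generators of $I$ is what guarantees the grading on $R/I$ is well-defined, so that counting basis elements degree by degree is legitimate.
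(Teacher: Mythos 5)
Your argument is correct and follows the same route as the paper: the paper likewise reads off the Hilbert series from the graded decomposition of the basis $B$ in Equation~\eqref{Eq:BinT}, using homogeneity of $I$ and the bijection of Proposition~\ref{prop:bijection}. You spell out a few bookkeeping details (degree of $y^\gamma$ equals $\ell_1(\gamma)$, truncation at $\lfloor n/2\rfloor$) that the paper leaves implicit, but there is no substantive difference.
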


\subsection{Irreducible graded decomposition of $R/I$}\label{ss:irreducible}
The symmetric group $S_n=\big\{ \sigma \big| \sigma\colon \{1,\ldots n\}\to\{1,\ldots,n\}\text{ bijection}\big\}$ acts on polynomials in $R$ with 
$\sigma P(y_1,\ldots,y_n)=P(y_{\sigma(1)},\ldots,y_{\sigma(n)})$. The generators of the ideal $I$ are mapped to the other generators of $I$ under this action. Hence, the ideal $I$ is invariant under the action of $S_n$. We can thus have a well-defined $\field$-linear action of $S_n$ on the quotient $N=R/I$. In fact, since the  action preserves degree, we have an action of $S_n$ on each graded piece $N^{(k)}$. Additionally, we are interested in describing the decomposition of that action in terms of the irreducible representation. For more details on the representation of the symmetric group see  \cite{Sagan}. For our needs, the irreducible representation of the symmetric group can be nicely constructed directly in the space of polynomials. We can construct such a representation using Specht polynomials~\cite{Specht}, which we introduced in \S\ref{ss:specht}.

It is known~\cite{Sagan,Specht} that for a fixed shape $\lambda$, the set 
\begin{equation}\label{eq:garnir}
 \big\{G_T\big| T \in SYT_\lambda\big\}
 \end{equation}
  is a basis for an irreducible representation 
of $S_n$ indexed  by   $\lambda$. 
Considering the disjoint decomposition of the basis $B'$ in  Equation~\eqref{Eq:B2inT}, we immediately obtain
\begin{theorem}
    For $k\le \lfloor \frac{n}{2} \rfloor$, we have that Equation~\eqref{eq:garnir} is a basis of $N^{(k)}$. In particular, $N^{(k)}$ is an irreducible representation of $S_n$
    indexed by $\lambda=(n-k,k)$. We can thus write
    $$ Frob_q(R/I) \sum_{k=0}^{ \lfloor \frac{n}{2} \rfloor} s_{(n-k,k)} q^k,$$
    where $s_\lambda$  is the Schur function used to encode the irreducible representation of $S_n$. Additionally, $Frob_q$ is the graded Frobenius map, which maps the representation to symmetric functions (see~\cite{Sagan,Macdonald}).
\end{theorem}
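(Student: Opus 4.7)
The plan is to leverage the work already done: Theorem~\ref{thm:indB} and the dimension count in \S\ref{ss:proofthm} have established that $B'=\biguplus_k \{G_T : T\in SYT_{(n-k,k)}\}$ is a basis of $H\cong R/I$. Since each $G_T$ with $T\in SYT_{(n-k,k)}$ is visibly homogeneous of degree $k$ (it is a product of $k$ linear forms $y_{T(1,j)}-y_{T(2,j)}$), the basis $B'$ splits with respect to the grading $R/I=\bigoplus_{k\ge 0} N^{(k)}$, and the degree-$k$ piece $\{G_T : T\in SYT_{(n-k,k)}\}$ must be a basis of $N^{(k)}$. Combined with Proposition~\ref{prop:bijection}, this confirms $\dim N^{(k)}=f^{(n-k,k)}$, matching Theorem~\ref{thm:hlbert}.

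Next I would address the $S_n$-module structure. The key point is that the three generators $y_1+\cdots+y_n$ and $y_1^2,\ldots,y_n^2$ of $I$ are mapped among themselves under any variable permutation $\sigma\in S_n$, so $\sigma I=I$. Hence the $S_n$-action $\sigma\cdot P(y_1,\ldots,y_n)=P(y_{\sigma(1)},\ldots,y_{\sigma(n)})$ on $R$ descends to a well-defined action on $N=R/I$ that preserves the grading. Therefore each $N^{(k)}$ is an $S_n$-module of dimension $f^{(n-k,k)}$.

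The heart of the proof is identifying $N^{(k)}$ with the Specht module $S^{(n-k,k)}$. Here I would invoke the classical theorem of Specht (see \cite{Specht} or \cite{Sagan}) stating that for any partition $\lambda\vdash n$, the $\field$-span of $\{G_T : T\in SYT_\lambda\}$ inside $R$ carries an irreducible representation of $S_n$ indexed by $\lambda$, under exactly this permutation action on variables. Applied to $\lambda=(n-k,k)$, we get an irreducible $S_n$-submodule $V_k\subseteq R$ with basis $\{G_T : T\in SYT_{(n-k,k)}\}$. The image of $V_k$ under the composition $V_k\hookrightarrow H \xrightarrow{\sim} R/I$ lands in $N^{(k)}$, is $S_n$-equivariant, and is injective because these $G_T$ remain linearly independent in $R/I$ (as they form part of the basis $B'$). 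Comparing dimensions, the map is an isomorphism of $S_n$-modules $V_k\cong N^{(k)}$, so $N^{(k)}$ is irreducible of type $(n-k,k)$.

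The Frobenius formula is then immediate from the definition of the graded Frobenius characteristic: $Frob_q(R/I)=\sum_{k\ge 0} Frob(N^{(k)})\, q^k=\sum_{k=0}^{\lfloor n/2\rfloor} s_{(n-k,k)}\, q^k$. The main obstacle in this argument is not computational but expository: one must be careful that the $S_n$-action used in the classical Specht construction (permutation of variables in $R$) is exactly the same action that we transport to $R/I$, and that the isomorphism $H\cong R/I$ coming from the inverse system of \S\ref{sec:inverse} intertwines these actions. The former is by construction; the latter follows because the scalar product \eqref{eq:scalar} is $S_n$-invariant (the differential operators and evaluation at $0$ are permutation-equivariant), so $H=I^\perp$ is an $S_n$-submodule of $R$ and the quotient isomorphism is automatically equivariant. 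Once this compatibility is in place, the rest of the proof reduces to the assembly of already-proved ingredients.
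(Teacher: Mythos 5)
Your proposal is correct and follows the same route as the paper: decompose the basis $B'$ by degree, invoke the classical Specht-module result for $\{G_T\}_{T\in SYT_\lambda}$, and read off the Frobenius characteristic. You have usefully filled in the one point the paper leaves implicit, namely that the isomorphism $H\cong R/I$ is $S_n$-equivariant because the scalar product in Equation~\eqref{eq:scalar} is permutation-invariant and hence $H=I^\perp$ is an $S_n$-submodule of $R$.
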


\small
\bibliographystyle{amsalpha}  
\bibliography{refrences}

\end{document}